\theoremstyle{plain}
\newtheorem{thm}{Theorem}[section]
\newtheorem{proposition}[thm]{Proposition}
\newtheorem{cor}[thm]{Corollary}
\newtheorem{lemma}[thm]{Lemma}
\theoremstyle{definition}
\newtheorem{defn}[thm]{Definition}
\newtheorem{exam}[thm]{Example}
\theoremstyle{remark}
\newtheorem{rmk}[thm]{Remark}
\begin{document}
\title{Euler-symmetric complete intersections in projective space} 
\author{Zhijun Luo}
\address{School of Mathematical Sciences, University of Chinese Academy of Sciences, Beijing, 100049, China }
\email{luozj@amss.ac.cn}

\begin{abstract}
  Euler-symmetric projective varieties, introduced by Baohua Fu and Jun-Muk Hwang in 2020, are nondegenerate projective varieties admitting many $\mathbb{C}^{\times}$-actions of Euler type. They are quasi-homogeneous and uniquely determined by their fundamental forms at a general point. 
  In this paper, we study complete intersections in projective spaces which are Euler-symmetric. It is proven that such varieties are complete intersections of hyperquadrics and the base locus of the second fundamental form at a general point is again a complete intersection.
\end{abstract}
\maketitle

\section{Introduction}\label{In}
  Throughout this paper, we work over the field of complex numbers $\mathbb{C}$.
  In \cite{FH20}, the following notion is introduced as a quasi-homogeneous generalization of Hermitian symmetric spaces.
  \begin{defn}
    Let $Z \subset \mathbb{P}V$ be a projective variety. For a nonsingular point $x\in Z$, a $\mathbb{C}^{\times}$-action on $Z$ coming from a multiplicative subgroup of ${\bf G}\mathrm{L}(V)$ is said to be of \textit{Euler type} at $x$, if $x$ is an isolated fixed point of the induced action on $Z$ and the isotropic action on the tangent space $T_xZ$ is by scalar multiplication (i.e., the induced action on $\mathbb{P}T_xZ$ is trivial). We say that $Z \subset \mathbb{P}V$ is \textit{Euler-symmetric} if for a general point $x\in Z$, there exists a $\mathbb{C}^{\times}$-action on $Z$ of Euler type at $x$.
  \end{defn}
  In \cite[Proposition 2.7]{FH20}, it is shown that any Euler-symmetric projective variety is uniquely determined by their fundamental forms (See Definition \ref{fundametalforms}) at general points. 
  Conversely, given a symbol system $\textbf{F} \subset \operatorname{Sym}(W^*)$ of rank $r$ (roughly, a symbol system is a vector subspace of a polynomial ring with higher order less than rank satisfying the prolongation property. See Definition \ref{d.symbol}, \ref{rank}), 
  there exists a unique Euler-symmetric projective variety (denoted by $M(\textbf{F})$), whose fundamental forms at general points are isomorphic to $\textbf{F}$ (\cite[Theorem 3.7]{FH20}). It is a challenging problem to relate geometrical properties of $M(\textbf{F})$ to algebraic properties of $\textbf{F}$.
  
  It turns out that any Euler-symmetric projective variety $Z$ is quasi-homogeneous. More precisely, it is an equivariant compactification of a vector group (\cite[Theorem 3.7]{FH20}), namely there exists a $\mathbb{G}_a^n$-action on $Z$ with an open orbit, where $n=\dim Z$. In this way, we obtain lots of examples of equivariant compactifications of vector groups, which are in general singular.
  
  In the smooth case, there are several papers dedicated to the study of different equivariant compactification structures on a given variety.

  The first one is the work of Hassett and Tschinkel \cite{hassett1999geometry}, where they established a correspondence between equivariant compactification structures on projective space $\mathbb{P}^n$ and commutative associative local algebras with unit of dimension $n+1$. This result also follows from a more general correspondence between finite-dimensional commutative associative unital algebras and open equivariant embeddings of commutative linear algebraic groups into projective space established by Knop and Lange \cite{knop1984commutative}.

  Equivariant compactification structures on projective hypersurfaces are studied in \cite{arzhantsev2014additive}, and the case of flag varieties is studied in \cite{arzhantsev2011flag}, \cite{cheong2017equivariant} and \cite{devyatov2015unipotent}. There are also some works on toric varieties \cite{arzhantsev2017additive}, \cite{dzhunusov2020uniqueness}.

  The purpose of this article is to determine when an Euler-symmetric projective variety is a complete intersection. 
  According to Theorem 3.1 of \cite{Ben13}, a smooth complete intersection has automorphism group of positive dimension if and only if it is either a smooth hyperquadric or a smooth cubic plane curve. It follows that a smooth complete intersection is Euler-symmetric if and only if it is a smooth hyperquadric. Hence, our main focus is on singular complete intersections, which are much less studied. 
  
  Our first result is the following:
  \begin{thm}\label{thm1}
  Let $Z \subset \mathbb{P}V$ be an Euler-symmetric variety corresponding to a symbol system $\textbf{F}$ of rank $r\geq 3$. Then $Z\subset \mathbb{P}V$ is not a complete intersection.
  \end{thm}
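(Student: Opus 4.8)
The plan is to realize $Z$ explicitly from its symbol system and then to show that the hypothesis $r\ge 3$ forces $Z$ to contain strictly more quadratic equations than its codimension permits for a complete intersection. First I would fix homogeneous coordinates on $\mathbb{P}V$ adapted to the $\mathbb{C}^{\times}$-action of Euler type: writing $V^{*}=\bigoplus_{i=0}^{r}\mathbf{F}_{i}$ with $\mathbf{F}_{0}=\mathbb{C}$ and $\mathbf{F}_{1}=W^{*}$, the coordinates split into blocks $z_{0},\,z_{1}^{(j)},\,z_{i}^{(\alpha)}$ of weights $0,1,\dots,r$, and $Z$ is the closure of the graph
\[
w\longmapsto\bigl[\,1:w:\mathrm{ev}_{2}(w):\cdots:\mathrm{ev}_{r}(w)\,\bigr],\qquad w\in W,
\]
where $\mathrm{ev}_{i}(w)\in\mathbf{F}_{i}^{*}$ is evaluation at $w$. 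Thus $\dim Z=n:=\dim W$, the codimension is $c=\sum_{i=2}^{r}\dim\mathbf{F}_{i}$, the open orbit $O\cong\mathbb{G}_{a}^{n}$ lies in $\{z_{0}\neq 0\}$, and the homogeneous ideal $I(Z)$ is bihomogeneous for the pair $(\text{degree},\text{weight})$. Since $Z$ is nondegenerate, $I(Z)_{1}=0$, so the number of degree-$2$ minimal generators of $I(Z)$ equals $\dim I(Z)_{2}$.

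The reduction I would use is elementary: if $Z$ were a complete intersection, then, being nondegenerate of codimension $c$, it would be cut out by a regular sequence of exactly $c$ forms, all of degree $\ge 2$; hence $\dim I(Z)_{2}\le c$. To produce quadrics in the other direction, for each $f$ in a basis of $\mathbf{F}_{w}$ with $2\le w\le r$ I combine the prolongation property $\partial_{j}\mathbf{F}_{w}\subseteq\mathbf{F}_{w-1}$ with Euler's identity $wf=\sum_{j}w_{j}\partial_{j}f$ to obtain
\[
Q_{f}=z_{0}\,z_{w}^{(f)}-\tfrac{1}{w}\sum_{j}z_{1}^{(j)}\,z_{w-1}^{(\partial_{j}f)}\in I(Z)_{2},
\]
where the second summand is read off by linear extension of $\delta\mapsto z_{w-1}^{(\delta)}$ on $\mathbf{F}_{w-1}$. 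These are $c$ linearly independent quadrics, so $\dim I(Z)_{2}\ge c$. Combining the two inequalities, a complete intersection structure would force $\dim I(Z)_{2}=c$ and, since the $Q_{f}$ then form a basis of $I(Z)_{2}$ and generate the whole ideal, it would force $I(Z)=(Q_{f})$ and hence $Z=V(\{Q_{f}\})$.

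Now comes the geometric heart. Every $Q_{f}$ vanishes on the linear subspace $L=\{z_{0}=z_{1}=0\}=\mathbb{P}\bigl(\bigoplus_{i\ge 2}\mathbf{F}_{i}^{*}\bigr)$, because each monomial of $Q_{f}$ contains a factor $z_{0}$ or a factor $z_{1}^{(j)}$. Therefore $V(\{Q_{f}\})\supseteq Z\cup L$, and the discussion above shows that \emph{if $Z$ is a complete intersection then $L\subseteq Z$}. So the whole argument reduces to proving $L\not\subseteq Z$. This is immediate when $c>\dim Z$: since $L\subseteq\{z_{0}=0\}$ is disjoint from the open orbit $O\subseteq\{z_{0}\neq 0\}$, we have $L\cap Z\subseteq Z\setminus O$, a set of dimension $\le n-1<c-1=\dim L$, so $L\not\subseteq Z$ and $Z$ is not a complete intersection.

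The hard part is the complementary range $c\le\dim Z$, where the dimension count no longer suffices and $L\not\subseteq Z$ must be extracted from the geometry of the boundary. Here the plan is to analyze the $\mathbb{C}^{\times}$-limits of $\phi(w)$ as $w\to\infty$: a boundary point lands in the weight-$i$ part of $L$ only along directions $w$ on which the higher evaluations $\mathrm{ev}_{i+1}(w),\dots,\mathrm{ev}_{r}(w)$ are dominated, i.e.\ essentially along the common zero locus of $\mathbf{F}_{i+1},\dots,\mathbf{F}_{r}$. Because $r\ge 3$ guarantees $\mathbf{F}_{3}\neq 0$, these higher forms cut the relevant directions down strictly, so the image of $Z$ in $L$ cannot fill, for instance, the subspace $\mathbb{P}\mathbf{F}_{2}^{*}\subseteq L$, giving $L\not\subseteq Z$; equivalently, this is exactly the assertion that $I(Z)$ contains a quadric (of weight $r+1$, arising from the forced overlap of $\mathbf{F}_{1}\mathbf{F}_{r}$ and $\mathbf{F}_{2}\mathbf{F}_{r-1}$ inside $\operatorname{Sym}^{2}W^{*}\cdot\mathbf{F}_{r-1}$) beyond the $Q_{f}$. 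Making this overlap into a \emph{guaranteed} extra quadric, uniformly over all symbol systems of rank $\ge 3$, is the main obstacle; the twisted cubic $\overline{[1:t:t^{2}:t^{3}]}$ (rank $3$, $c=2$, three independent quadrics) is the guiding example and I expect its mechanism to persist.
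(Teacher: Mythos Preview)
Your reduction is correct and matches the paper: if $Z$ were a complete intersection, then necessarily $I(Z)=(Q_f)$, so $Z=V(\{Q_f\})$ and in particular $L=\{z_0=z_1=0\}\subseteq Z$. This is exactly Proposition~\ref{prop31}, and your observation that every $Q_f$ vanishes on $L$ is a clean geometric reformulation of it. The dimension argument for $c>n$ is also fine.

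The genuine gap is the case $c\le n$, and your proposed mechanism there is wrong. You assert that the obstruction should materialize as an extra \emph{quadric} in $I(Z)$, of weight $r+1$, coming from a forced overlap between $\mathbf{F}_1\mathbf{F}_r$ and $\mathbf{F}_2\mathbf{F}_{r-1}$. This fails already for simple rank-$3$ symbol systems. Take $W$ of dimension $n\ge 3$, $F^2=\langle x_1^2+x_2^2,\,x_1x_2\rangle$, $F^3=\langle x_1^3+3x_1x_2^2\rangle$; then $c=3\le n$, and a direct computation in each weight shows that $I(Z)_2$ is exactly the $3$-dimensional span of the $Q_f$, with no further quadric. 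So $L\not\subseteq Z$ cannot be witnessed by a quadric, and your ``forced overlap'' does not occur. Your $\mathbb{C}^\times$-limit sketch is also too coarse: boundary points of $Z$ are not just one-parameter limits of $\phi(w)$, and the statement that the image of $Z$ in $L$ cannot fill $\mathbb{P}\mathbf{F}_2^*$ is precisely what needs proof.

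What actually works, and what the paper does, is to produce a \emph{high-degree} polynomial in $I(Z)$ that does not vanish on $L$. The idea is transcendence degree: the forms $Q_1^{(r-1)},\ldots,Q_{s_{r-1}}^{(r-1)}$ and a chosen $T\in F^r$ together are algebraically dependent once you know they live in at most $n$ variables (if $s_{r-1}<n$ one first uses an Euler-identity argument, Lemma~\ref{lemma32}, to reduce $T$ and the relevant $Q^{(r-1)}_i$ to $\le s_{r-1}$ variables). Any algebraic relation $G(y_1,\ldots,y_{s_{r-1}},y)$ with $G(Q^{(r-1)}_\bullet,T)\equiv 0$ then yields a homogeneous element of $I(Z)$ involving only the coordinates $w^{(r-1)}_\bullet,w^{(r)}$ (and possibly $z_0$ after homogenizing), and the paper's weight-bookkeeping (Lemma~\ref{lemma31}) shows it has a nonzero component in $\mathbb{C}[w^{(r-1)}_\bullet]$ that cannot be cancelled by anything in $\operatorname{rad}((Q_f))$ when $r\ge 3$. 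That component is nonzero on $L$, so in particular $L\not\subseteq Z$; but the polynomial has degree governed by the algebraic relation and is generically far from quadratic. The twisted cubic is misleading here: its extra equation happens to be a quadric only because $n=1$ makes the algebraic dependence appear already in degree $2$.
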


  It follows that if $M(\textbf{F}) \subset \mathbb{P}V_{\textbf{F}}$ is an Euler-symmetric complete intersection corresponding to a symbol system $\textbf{F}$, then the rank of $\textbf{F}$ is $2$. There are examples of Euler-symmetric varieties of rank 2 which are not complete intersections (see Example \ref{exam38}). 

  Now let $\textbf{F}$ be a symbol system of rank $2$, namely $F^2$ is a vector space generated by quadratic polynomials, $Q_1, \cdots, Q_k\in \operatorname{Sym}^2W^*$, and $F^j = 0$, for all $j\geq 3$. 
  The associated Euler-symmetric variety $M(\textbf{F})$ is covered by lines. For a general point $x\in M(\textbf{F})$, let $\mathcal{L}_x(M(\textbf{F}))$ denote the variety of lines (See Definition \ref{varofline}) through $x$. 
  We denote by $\textbf{Bs}(F)\subset \mathbb{P}W$ the intersection of hyperquadrics $\{Q_1=\cdots=Q_k=0\}$.

  \begin{thm}\label{thm2}
    Let $M(\textbf{F})\subset \mathbb{P}V_{\textbf{F}}$ be an Euler-symmetric variety of dimension $n$ associated to a symbol system $\textbf{F}$ of rank $2$ with $F^2 = \left\langle Q_1, \cdots, Q_c\right\rangle$ of dimension $c$. The following statements are equivalent:
    \begin{enumerate}
      \item $M(\textbf{F})$ is a complete intersection of codimension $c$ in $\mathbb{P}V_{\textbf{F}}$;
      \item $\textbf{Bs}(F)\subset \mathbb{P}W$ is a set-theoretic complete intersection of codimension $c$;
      \item $\mathcal{L}_x(M(\textbf{F})) \subset \mathbb{P}T_xM(\textbf{F})$ is a set-theoretic complete intersection of codimension $c$, for a general point $x\in M(\textbf{F})$.
      \item The finite sequence $(Q_1, \cdots, Q_c)$ is a regular sequence in $\operatorname{Sym}(W^*)$.
    \end{enumerate}
  \end{thm}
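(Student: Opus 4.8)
The plan is to prove the four conditions equivalent by establishing a web of implications, exploiting the fact that all three varieties in play are intimately related through the quadrics $Q_1,\dots,Q_c$. The conceptual heart is that $\mathbf{Bs}(F)\subset\mathbb{P}W$, the variety of lines $\mathcal{L}_x(M(\textbf{F}))\subset\mathbb{P}T_xM(\textbf{F})$, and the ideal-theoretic nature of $(Q_1,\dots,Q_c)$ are three manifestations of the same data. First I would pin down the relationship between $\mathbf{Bs}(F)$ and $\mathcal{L}_x(M(\textbf{F}))$: since $\textbf{F}$ has rank $2$ and its fundamental form at a general point is the second fundamental form $\mathrm{II}_x$ whose base locus governs the lines through $x$ (by the standard description of the variety of lines in terms of the second fundamental form, as in the cited work of Fu--Hwang), the projectivized base locus $\mathbf{Bs}(F)=\{Q_1=\cdots=Q_c=0\}\subset\mathbb{P}W$ should be projectively isomorphic to $\mathcal{L}_x(M(\textbf{F}))$. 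This would make the equivalence $(2)\Leftrightarrow(3)$ essentially a tautology once the identification $\mathbb{P}W\cong\mathbb{P}T_xM(\textbf{F})$ (up to the relevant dimension count) is made precise, so the real content lies elsewhere.

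Next I would handle the purely algebraic equivalence $(2)\Leftrightarrow(4)$. A regular sequence of $c$ forms cuts out a subvariety of pure codimension $c$, so $(4)\Rightarrow(2)$ is immediate from the unmixedness/Krull-height considerations: $V(Q_1,\dots,Q_c)$ has codimension exactly $c$, which is precisely the set-theoretic complete intersection condition. For the converse $(2)\Rightarrow(4)$, I would use that $Q_1,\dots,Q_c$ are forms in the polynomial ring $\operatorname{Sym}(W^*)$, which is Cohen--Macaulay; there, a system of $c$ homogeneous elements is a regular sequence if and only if the variety they define has codimension $c$ (Krull's height theorem together with the characterization of regular sequences in a Cohen--Macaulay ring via codimension). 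The set-theoretic complete intersection hypothesis gives exactly $\operatorname{codim} V(Q_1,\dots,Q_c)=c$, hence the sequence is regular. I expect this step to be routine commutative algebra, modulo being careful that set-theoretic and the relevant scheme-theoretic codimensions agree (they do, since codimension is a topological invariant of the underlying variety).

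The genuinely substantive implications are those linking the ambient variety $M(\textbf{F})$ to its ``line data,'' namely $(1)\Leftrightarrow(2)$ (equivalently $(1)\Leftrightarrow(3)$). Here I would lean on the explicit construction of $M(\textbf{F})$ as an equivariant compactification of a vector group, together with the rank-$2$ hypothesis which, by Theorem \ref{thm1} and the discussion preceding this theorem, forces the defining equations of $M(\textbf{F})$ (if it is any complete intersection) to be quadrics built from $Q_1,\dots,Q_c$. Concretely, the embedding $M(\textbf{F})\subset\mathbb{P}V_{\textbf{F}}$ with $V_{\textbf{F}}=\mathbb{C}\oplus W\oplus F^2{}^*$ (the graded pieces coming from the symbol system) should have the property that the homogeneous ideal of $M(\textbf{F})$ is generated by $c$ quadrics whose restriction to the $W$-directions recovers $Q_1,\dots,Q_c$. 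The plan is to show that $M(\textbf{F})$ is cut out set-theoretically by these $c$ quadrics precisely when $\mathbf{Bs}(F)$ has codimension $c$, by analyzing the $\mathbb{C}^\times$-weight decomposition: the quadrics defining $M(\textbf{F})$ degenerate, under the Euler-type action limiting to the affine cone directions, exactly to the quadrics $Q_i$ on $W$, so the codimension of $M(\textbf{F})$ and that of $\mathbf{Bs}(F)$ are forced to coincide.

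The main obstacle will be the last step: controlling the codimension of $M(\textbf{F})$ globally (as a projective variety in $\mathbb{P}V_{\textbf{F}}$) from the local/tangential data encoded by $\mathbf{Bs}(F)$, since a priori the $c$ defining quadrics could fail to be a complete intersection at the ``boundary'' $M(\textbf{F})\setminus\mathbb{G}_a^n$ even when they behave well on the open orbit. I expect to resolve this by using the $\mathbb{G}_a^n$-equivariance and the $\mathbb{C}^\times$-action of Euler type to reduce the global codimension computation to the open orbit, where the equations are explicitly the $Q_i$; the homogeneity under the Euler action then propagates the codimension bound to the closure. Establishing that no spurious components or dimension jumps occur along the boundary—i.e. that $\dim M(\textbf{F})=n$ is compatible with being cut out by exactly $c$ quadrics—is where the argument must be made with care, and it is here that the regular-sequence reformulation in $(4)$ becomes the cleanest bookkeeping device.
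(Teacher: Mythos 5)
Your outline gets the easy equivalences right, but it has a genuine gap at the heart of the theorem: you never bridge the difference between \emph{set-theoretic} complete intersection and \emph{complete intersection} for $M(\textbf{F})$ itself. Statement (1) is ideal-theoretic: $I(M(\textbf{F}))$ must be generated by $c$ elements. Your plan for $(2)\Rightarrow(1)$ --- weight decomposition, equivariant reduction to the open orbit, ruling out spurious boundary components --- only controls dimensions and underlying sets, so at best it yields that $M(\textbf{F})$ coincides as a set with $Y=V_+(f_1,\cdots,f_c)$, where $f_j = w_0w_{n+j}-Q_j(w_1,\cdots,w_n)$; this is what the paper calls $M(\textbf{F})$ being \emph{quadratic} (Theorem \ref{thm41}). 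That is strictly weaker than (1): being cut out set-theoretically by $c$ quadrics in codimension $c$ makes $(f_1,\cdots,f_c)$ unmixed but not necessarily radical or prime, and in general set-theoretic complete intersections are not complete intersections --- the paper's own examples make exactly this point (every rational normal curve, and the scroll with $F^2=\left\langle x_1^2, x_1x_2\right\rangle$, are set-theoretic complete intersections but not complete intersections). The missing step, which is the substantive part of the paper's proof (part (ii) of the proof of Theorem \ref{thm42}), is to show that when $(Q_1,\cdots,Q_c)$ is a regular sequence, every homogeneous $g\in I(M(\textbf{F}))$ already lies in the ideal $(f_1,\cdots,f_c)$: one reduces $g$ modulo the $f_j$ to eliminate $w_0$ and the pure $w_1,\cdots,w_n$ part, notes that membership in $I(M(\textbf{F}))$ then amounts to the identity $g(x_1,\cdots,x_n,Q_1,\cdots,Q_c)\equiv 0$, and rewrites this as $\sum_{t=1}^{l}Q_tF_t(x_1,\cdots,x_n,Q_t,\cdots,Q_c)=0$ with $F_l\notin(w_{n+1},\cdots,w_{n+l-1})$, exhibiting $Q_l$ as a zero-divisor modulo $(Q_1,\cdots,Q_{l-1})$ and contradicting regularity. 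There is no general principle that upgrades the set-theoretic statement to generation of the ideal, so without an argument of this kind your proof cannot close; this is precisely where hypothesis (4) does real work, not mere ``bookkeeping.''

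A secondary, smaller issue: the equivalence $(2)\Leftrightarrow(3)$ is not ``essentially a tautology.'' For an arbitrary variety one only has the inclusion of $\mathcal{L}_x$ in the base locus of the second fundamental form, and equality can fail (for a degree-$d$ hypersurface, lines through $x$ must satisfy conditions of degrees $2,\ldots,d$, not just the quadratic one); moreover $M(\textbf{F})$ is singular in all the cases of interest, so smooth-variety results such as those cited for varieties covered by lines do not apply directly. The paper proves the identification in Proposition \ref{bp} by an argument specific to this setting: lines through $x$ inside the auxiliary variety $Y$ are computed explicitly to be the cone over $\textbf{Bs}(\textbf{F})$, and the vector-group action of Lemma \ref{lemstab} is used to show any line of $Y$ through $x$ must actually lie in $M(\textbf{F})$ (otherwise translating it by the dense orbit would force $M(\textbf{F})$ into the residual component of $Y$). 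Your proposal should either reproduce such an argument or cite a result that genuinely covers the singular case.
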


  For the proof, first we show that an Euler-symmetric complete intersection is a complete intersection of hyperquadrics (Proposition \ref{prop31}). 
  When the rank is at least $3$, we then find one more homogeneous polynomial that, after several computations, does not lie in the ideal generated by hyperquadrics defined in Section \ref{sect3}, which proves Theorem \ref{thm1}. For Theorem \ref{thm2}, we use the relation of regular sequence (See Definition \ref{regseq}) of homogeneous polynomials with set-theoretic complete intersection to get a numerical criterion for $Y$ to be a set-theoretic complete intersection (Proposition \ref{prop41}). 
  Finally, we prove equivalent conditions for $M(\textbf{F})$ to be a complete intersection in Theorem \ref{thm42}, and $M(\textbf{F}) = Y$ is a set-theoretic complete intersection of hyperquadrics in Theorem \ref{thm41}, which allows us to conclude the proof of Theorem \ref{thm2}.

\section{Preliminaries}
  We first recall some definitions from \cite{FH20}.
  \begin{defn}
    A subvariety $Z$ of dimension $n$ in $\mathbb{P}^{n+c}$ is a \textit{complete intersection} if its vanishing ideal $I(Z)$ can be generated by $c$ elements.
  \end{defn}
  \begin{defn}
    A subvariety $Z$ of dimension $n$ in $\mathbb{P}^{n+c}$ is a \textit{set-theoretic complete intersection} if $Z$ can be written as the intersection of $c$ hypersurfaces or equivalently there are $c$ homogeneous polynomials $g_1, \cdots, g_c$ such that
    $$I(Z) = \operatorname{rad}((g_1, \cdots, g_c))\subset \mathbb{C}[w_0, w_1, \cdots, w_{n+c}].$$
  \end{defn}
  \begin{defn}\label{regseq}
    For a commutative ring $R$, a \textit{regular sequence} is a sequence $a_1, \ldots, a_d$ in $R$ such that $a_i$ is not a zero-divisor on $R/(a_1, \ldots, a_{i-1})$ for $i =1, \ldots, d$.
  \end{defn}
  \begin{defn} 
    Let $W$ be a vector space. For $w\in W$, define
    $$\iota_w: \operatorname{Sym}^{k+1}W^* \to \operatorname{Sym}^{k}W^*, \, \iota_w\varphi(w_1, \cdots, w_k) = \varphi(w, w_1, \cdots, w_k),$$
    for any $w_1, \cdots, w_k \in W$.
    By convention, we define $\iota_w(\operatorname{Sym}^0W^*) = 0$.
    For a subspace $F^k \subset \operatorname{Sym}^{k}W^*$ of symmetric $k$-linear forms on $W$, define its prolongation $\textbf{prolong}(F^k) \subset \operatorname{Sym}^{k+1}W^*$ by the following
    $$ \textbf{prolong}(F^k):= \bigcap_{w\in W}\iota_w^{-1}(F^k).$$
  \end{defn}

  \begin{rmk}
	  By Lemma 3.5 of \cite{FH20}, the restriction of $\iota_w^k$ to $F^k$ determines an element in $(F^k)^*$, which is just the map $\phi \mapsto \phi(w, \cdots, w)$. By abuse of notation, we just denote it by $\iota_w^k \in (F^k)^*$.
  \end{rmk}

  \begin{defn} \label{d.symbol}
    Let $W$ be a vector space. Fix a natural number $r$. A subspace
    $$
      \textbf{F} = \oplus_{k\geq 0}F^k \subset \operatorname{Sym}(W^*):= \oplus_{k\geq 0}\operatorname{Sym}^{k}W^*
    $$
    with
    $$
      F^0 = \mathbb{C} = \operatorname{Sym}^{0}W^*, \; F^1 = W^*,\; F^r \neq 0, \; \text{and}\; F^{r+i} = 0 \; \forall \; i \geq 1,
    $$
    is called a \textit{symbol system of rank $r$}, if $F^{k+1} \subset \textbf{prolong}(F^k)$ for each $1\leq k \leq r$.
  \end{defn}
  \begin{rmk}
      The condition $F^{k+1} \subset \textbf{prolong}(F^k)$ informally implies that all partial derivative of any elements in $F^{k+1}$ lie in $F^k$. 
      Moreover, it is essential for extending the natural  action of the vector group $W$ on $\mathbb{P}(\mathbb{C}\oplus W)$ to an action of $W$ on $\mathbb{P}V_{\textbf{F}}$. 
      For convenience, we often equate $\operatorname{Sym}(W^*)$ with $\mathbb{C}[x_1, \ldots, x_n]$, where $n = \operatorname{dim}(W)$.
  \end{rmk}
  \begin{defn}\label{fundametalforms}
    Let $x\in Z \in \mathbb{P}V$ be a nonsingular point of a nondegenerate projective variety. 
    Let $L$ be the line bundle on $Z$ given by the restriction of the hyperplane line bundle on $\mathbb{P}V$. For each nonnegative integer $k$, let $\textbf{m}_{x, Z}^k$ be the $k$-th power of the maximal ideal $\textbf{m}_{x,Z}$. For a section $s\in H^0(Z, L)$, let $j^k_x(s)$ be the $k$-jet of $s$ at $x$ such that $j^0_x(s) = s_x\in L_x$. We have a descending filtration of the dual space $V^*\subset H^0(Z, L)$ by 
    \[
      V^*\cap \operatorname{Ker}(j^k_x) \subset V^*\cap \operatorname{Ker}(j^{k-1}_x). 
    \]
    This induced homomorphism
    \[
        (V^*\cap \operatorname{Ker}(j^{k-1}_x))/(V^*\cap \operatorname{Ker}(j^k_x))\to L_x\otimes \operatorname{Sym}^kT^*_xZ
    \]
    is injective. For each $k\geq 2$, the subspace $F^k_x \subset \operatorname{Sym}^kT^*_xZ$ defined by the image of this homomorphism is called the $k$-th \textit{fundamental form} of $Z$ at $x$. For convenience, set $F^0_x = \operatorname{Sym}^0T^*_xZ = \mathbb{C}$ and $F^1_x = \operatorname{Sym}^1T^*_xZ = T^*_xZ$. The collection of the subspaces
    \[
        \textbf{F} := \oplus_{k\geq 0} F^k_x \subset \operatorname{Sym}(T^*_xZ)
    \]
    is called the \textit{system of fundamental forms} of $Z$ at $x$.
  \end{defn}

  \begin{rmk}
    On can find in \cite{ivey2003cartanivey2003cartan} a more general definition of fundamental forms. By Cartan's theorem (p.68 \cite{LM03} or Exercise 12.1.10 in \cite{ivey2003cartanivey2003cartan}),  the fundamental forms at a general point form a symbol system.
  \end{rmk}
  \begin{defn}\label{rank}
    Given a symbol system $\textbf{F}$, define a rational map
    $$
      \phi_{\textbf{F}} : \mathbb{P}(\mathbb{C}\oplus W) \dashrightarrow \mathbb{P}(\mathbb{C}\oplus W \oplus (F^2)^*\oplus \cdots \oplus (F^r)^*),
    $$
    by
    $$
      [t:w] \mapsto [t^r: t^{r-1}w: t^{r-2}\iota^2_w:\cdots: t\, \iota^{r-1}_w:\iota^r_w].
    $$
    Write $V_{\textbf{F}}:=\mathbb{C} \oplus W \oplus (F^2)^*\oplus \cdots \oplus (F^r)^*$. We will denote the closure of the image of the rational map $\phi_{\textbf{F}}$ by $M(\textbf{F}) \subset \mathbb{P}V_{\textbf{F}}$. Then $\phi_{\textbf{F}} : \mathbb{P}(\mathbb{C}\oplus W) \dashrightarrow M(\textbf{F})$ is a birational map and $M(\textbf{F}) \subset \mathbb{P}V_{\textbf{F}}$ is nondegenerate, i.e. $M(\textbf{F}) \nsubseteq H$, for any hyperplane section $H$ of $\mathbb{P}V_{\textbf{F}}$.

    We say the projective variety $M(\textbf{F})$ associated to a symbol system $\textbf{F}$ has rank $r$, denoted by $\operatorname{rank}(M(\textbf{F}))$, if the symbol system $\textbf{F}$ has rank $r$. 
    Set
    $$N = \operatorname{dim}(\mathbb{P}V_{\textbf{F}}), \; n = \operatorname{dim}(\mathbb{P}(\mathbb{C}\oplus W)) = \operatorname{dim}(M(\textbf{F})). $$
  \end{defn}

  \begin{rmk}
    Let $r = \operatorname{rank}(M(\textbf{F}))$, if $r = 1$, then $M(\textbf{F}) = \mathbb{P}^n = \mathbb{P}^N$. Hence, we always assume that $r \geq 2$.
  \end{rmk}

  \begin{thm}[Theorem 3.7 of \cite{FH20}]\label{thmfh20}
    Let $o= [1:0:\cdots:0]\in M(\textbf{F})$ be the point $\phi_{\textbf{F}}([t=1: w=0])$. Then:
    \begin{enumerate}
      \item The natural action of the vector group $W$ on $\mathbb{P}(\mathbb{C}\oplus W)$ can be extended to an action of $W$ on $\mathbb{P}V_{\textbf{F}}$ preserving $M(\textbf{F})$ such that the orbit of $o$ is an open subset biregular to $W$.
      \item The $\mathbb{C}^{\times}$-action on $W$ with weight $1$ induces a $\mathbb{C}^{\times}$-action on $M(\textbf{F})$ of Euler type at $o$, making $M(\textbf{F})$ Euler-symmetric.
      \item The system of fundamental forms of $M(\textbf{F})\subset \mathbb{P}V_{\textbf{F}}$ at $o$ is isomorphic to the symbol system $\textbf{F}$.
    \end{enumerate}
    Conversely, any Euler-symmetric projective variety is of the form $M(\textbf{F})$ for some symbol system $\textbf{F}$ on a vector space $W$.
  \end{thm}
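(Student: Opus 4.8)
The plan is to establish the three forward assertions by explicit computation with the map $\phi_{\textbf{F}}$, and then to treat the converse separately.

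\emph{Extending the action (1).} For each $v\in W$ I would produce a linear automorphism $\rho(v)$ of $V_{\textbf{F}}=\mathbb{C}\oplus W\oplus (F^2)^*\oplus\cdots\oplus (F^r)^*$ realizing the translation $[t:w]\mapsto[t:w+tv]$, by requiring $\rho(v)\,\phi_{\textbf{F}}([t:w])=\phi_{\textbf{F}}([t:w+tv])$ on the dense locus $t\neq 0$. Expanding by multilinearity gives $\iota^k_{w+tv}(\phi)=\sum_{j=0}^{k}\binom{k}{j}t^{j}\,\iota^{k-j}_{w}(\iota_v^{j}\phi)$, and here the prolongation hypothesis $F^{k+1}\subset\textbf{prolong}(F^k)$ is exactly what guarantees $\iota_v^{j}\phi\in F^{k-j}$, so that each summand pairs with the $(F^{k-j})^*$-coordinate of $\phi_{\textbf{F}}([t:w])$. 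The result is a $t$-independent, block lower-triangular unipotent formula for $\rho(v)$, so $\rho(v)$ is a well-defined linear automorphism; since it carries the dense set $\phi_{\textbf{F}}(\{t\neq 0\})$ into itself, it preserves the closure $M(\textbf{F})$. That $\rho(v_1)\rho(v_2)=\rho(v_1+v_2)$ follows because both sides send $\phi_{\textbf{F}}([t:w])$ to $\phi_{\textbf{F}}([t:w+t(v_1+v_2)])$, and these image vectors span $V_{\textbf{F}}$ by nondegeneracy (Definition \ref{rank}); hence $\rho$ is a homomorphism from the additive group $W$ whose orbit of $o$ is the image of the chart $\{t=1\}$, biregular to $W$ through the projection to the $W$-coordinate.

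\emph{Euler type and fundamental forms (2), (3).} The weight-$1$ action $w\mapsto\lambda w$ transports under $\phi_{\textbf{F}}$ to the linear $\mathbb{C}^{\times}$-action $\sigma(\lambda)$ scaling the summand $(F^k)^*$ by $\lambda^{k}$, since $\iota^k_{\lambda w}=\lambda^{k}\iota^k_w$. Near $o$ the variety is the image of $v\mapsto[1:v:\iota^2_v:\cdots:\iota^r_v]$, whose components of degree $\geq 2$ vanish to second order at $v=0$, so $T_oM(\textbf{F})=W$ lies in weight $1$; the isotropy therefore acts on $T_oM(\textbf{F})$ by the scalar $\lambda$ (Euler type), and the only nonzero tangent weight being $1$ forces $o$ to be an isolated fixed point. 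Homogeneity from (1) then spreads Euler type to the whole open orbit, i.e. to a general point. For (3) I would trivialize $L$ at $o$ by the weight-$0$ coordinate and note that a linear form $(c,\alpha,\phi_2,\ldots,\phi_r)\in V_{\textbf{F}}^*=\mathbb{C}\oplus W^*\oplus F^2\oplus\cdots\oplus F^r$ restricts in the chart to $f(v)=c+\alpha(v)+\sum_{k\geq 2}\phi_k(v,\ldots,v)$. The jet filtration at $o$ successively kills $c,\alpha,\phi_2,\ldots,\phi_{k-1}$, so the degree-$k$ leading term is exactly $\phi_k$ ranging over $F^k$; under the identification $T^*_oM(\textbf{F})=W^*$ this shows $F^k_o=F^k$, hence the system of fundamental forms at $o$ is $\textbf{F}$.

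\emph{The converse, and the main obstacle.} Given an Euler-symmetric $Z\subset\mathbb{P}V$ with an Euler-type $\mathbb{C}^{\times}$-action at a general point $x$, I would set $\textbf{F}$ equal to its system of fundamental forms at $x$ (a symbol system by Cartan's theorem) and reconstruct the embedding. The Euler-type action grades $V=\oplus_m V_m$ with $x\in\mathbb{P}V_0$ and $T_xZ$ in weight $1$, and the goal is to identify $V_m$ with $(F^m_x)^*$ and the embedding of $Z$ with $\phi_{\textbf{F}}$. The crucial step — which I expect to be the real obstacle — is to produce the acting vector group abstractly: one must show that the Euler-symmetric hypothesis forces a $\mathbb{G}_a$-action with $x$ in an open orbit, so that $Z$ is swept out by its flow and the homogeneous coordinates of the orbit become precisely the polynomials $t^{r-k}\iota^k_w$. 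Matching these against the forward construction yields $Z=M(\textbf{F})$. In short, the forward direction reduces to the prolongation bookkeeping of the first two paragraphs, whereas the converse, reconstructing the vector group and showing the parametrization is exactly $\phi_{\textbf{F}}$, is where the substantive difficulty lies.
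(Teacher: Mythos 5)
This statement carries no proof in the paper at all: it is Theorem~3.7 of \cite{FH20}, imported verbatim as a preliminary, so the only meaningful comparison is with Fu--Hwang's original argument. Your forward direction is essentially that argument, and it is correct: the unipotent maps $\rho(v)$ defined through the binomial expansion $\iota^k_{w+tv}(\phi)=\sum_{j}\binom{k}{j}t^{j}\,\iota^{k-j}_{w}(\iota_v^{j}\phi)$, with the prolongation property supplying $\iota_v^{j}\phi\in F^{k-j}$; the verification of the group law on the affine lifts $(t^r,t^{r-1}w,\ldots,\iota^r_w)$, which span $V_{\textbf{F}}$ by nondegeneracy; the weight-$k$ scaling of $(F^k)^*$ giving the Euler-type action at $o$ (and at all points of the open orbit by conjugation, since $\sigma(\lambda)\rho(v)\sigma(\lambda)^{-1}=\rho(\lambda v)$); and the jet computation in the chart $\{z_0\neq 0\}$ identifying $F^k_o$ with $F^k$. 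All three enumerated items are established modulo routine detail.

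The genuine gap is the converse, which is half of the theorem: you name it as ``the real obstacle'' and then stop, so your last paragraph is a plan, not a proof. Moreover the plan points at the wrong mechanism. You propose to first reconstruct, from the Euler-symmetric hypothesis alone, a $\mathbb{G}_a^n$-action on $Z$ with open orbit and then match its orbit coordinates against $t^{r-k}\iota^k_w$; but in \cite{FH20} the vector group action on $Z$ is obtained \emph{a posteriori}, by transporting it from $M(\textbf{F})$ once $Z\cong M(\textbf{F})$ is known. The identification $Z\cong M(\textbf{F})$ itself is deduced from a separate uniqueness statement --- Proposition~2.7 of \cite{FH20}, quoted in this paper's introduction --- asserting that an Euler-symmetric variety is determined up to projective equivalence by its system of fundamental forms at a general point; its proof is an analysis of the weight decomposition of sections under the Euler-type $\mathbb{C}^{\times}$-action, not a reconstruction of the additive group. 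Granting that proposition, the converse is three lines: let $\textbf{F}$ be the fundamental forms of $Z$ at a general point (a symbol system by Cartan's theorem), observe that by your forward direction $M(\textbf{F})$ is Euler-symmetric with the same fundamental forms, and conclude $Z\cong M(\textbf{F})$. Since you prove neither the uniqueness proposition nor your proposed abstract reconstruction, the converse remains unestablished in your write-up.
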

  
  Recall that a smooth complete intersection has automorphism group of positive dimension if and only if it is either a smooth hyperquadric or a smooth cubic plane curve. This immediately gives the following.
  \begin{cor}
    A smooth Euler-symmetric variety is not a complete intersection unless it is a hyperquadric.
  \end{cor}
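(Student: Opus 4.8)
The plan is to combine the structure theorem for Euler-symmetric varieties (Theorem \ref{thmfh20}) with the cited classification of smooth complete intersections carrying a continuous symmetry. Let $Z$ be a smooth Euler-symmetric variety that is a complete intersection, and set $n = \dim Z$; we may harmlessly assume $n \geq 1$, since for $n = 0$ nondegeneracy forces $Z$ to be a single point spanning a zero-dimensional projective space. The central observation is that an Euler-symmetric variety is very far from rigid: by Theorem \ref{thmfh20}(1) it carries an action of the vector group $W \cong \mathbb{G}_a^n$ whose orbit of the base point $o$ is a dense open subset biregular to $W$. In particular the orbit map is an isomorphism onto its image, so the stabilizer of $o$ is trivial and the $W$-action is faithful.

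From this I would extract two consequences. First, faithfulness gives an injection $W \hookrightarrow \operatorname{Aut}(Z)$, so $\dim \operatorname{Aut}(Z) \geq n \geq 1$ and the automorphism group is positive-dimensional. Second, the dense open orbit is isomorphic to $W \cong \mathbb{A}^n$, so $Z$ is rational. With positive-dimensionality of $\operatorname{Aut}(Z)$ in hand, I invoke Theorem 3.1 of \cite{Ben13}: a smooth complete intersection has automorphism group of positive dimension if and only if it is a smooth hyperquadric or a smooth cubic plane curve. Hence $Z$ is one of these two.

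It then remains only to rule out the smooth cubic plane curve. Such a curve is an elliptic curve, hence of genus one and therefore not rational. This contradicts the rationality of $Z$ established above, so the cubic plane curve cannot occur. Consequently $Z$ must be a smooth hyperquadric, which is exactly the conclusion of the corollary.

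I do not expect any genuinely hard step here: the entire weight of the argument is imported, on one side from the quasi-homogeneity built into Theorem \ref{thmfh20} (which simultaneously supplies both a positive-dimensional automorphism group and rationality), and on the other side from the external classification in \cite{Ben13}. The only independent input is the rationality obstruction that excludes the elliptic curve, and the only points demanding care are the logical direction (the statement is the contrapositive: a smooth Euler-symmetric complete intersection is forced to be a hyperquadric) and checking that the cited classification is read correctly so that the cubic plane curve is indeed its sole exception besides the hyperquadric. No computation beyond these verifications is needed.
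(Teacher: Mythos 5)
Your proof is correct and follows essentially the same route as the paper, which simply recalls Theorem 3.1 of \cite{Ben13} and treats the corollary as immediate from the positive-dimensional automorphism group guaranteed by Theorem \ref{thmfh20}. Your only addition is to make explicit the exclusion of the smooth cubic plane curve via rationality of the open $W$-orbit (an elliptic curve is not rational), a step the paper leaves implicit but which is needed and which you handle correctly.
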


  There are lots of Euler-symmetric varieties by Theorem \ref{thmfh20}. The example below from Example 2.2 \cite{FH20} shows that there are at least as many nonsingular Euler-symmetric varieties as nonsingular projective varieties.
  \begin{exam}
    Let $S\subset \mathbb{P}^{n-1} \subset \mathbb{P}^n$ be a nonsingular algebraic subset in a hyperplane of $\mathbb{P}^n$. For each point $x\in \mathbb{P}^n\backslash \mathbb{P}^{n-1}$, the scalar multiplication on the affine space $\mathbb{P}^n\backslash \mathbb{P}^{n-1}$ regarded as a vector space with the origin at $x$ can be extended to a $\mathbb{C}^{\times}$-action
    \[
      \textbf{A}_x: \mathbb{C}^{\times}\times \mathbb{P}^n \to \mathbb{P}^n,
    \]
    which fixes every point of the hyperplane $\mathbb{P}^{n-1}$. Let $\beta:\textbf{Bl}_S(\mathbb{P}^n)\to \mathbb{P}^n$ be the blowup of $\mathbb{P}^n$ along $S$ and let $E$ the exceptional divisor. For suitable positive integers $a$ and $b$, the line bundle $L := \mathcal{O}(-aE)\otimes\beta^*\mathcal{O}_{\mathbb{P}^n}(b)$ on $\textbf{Bl}_S(\mathbb{P}^n)$ is very ample. The action $\textbf{A}_x$ induces an action on the image
    \[
      Z \subset \mathbb{P}H^0(\textbf{Bl}_S(\mathbb{P}^n), L)^*
    \]
    of the projective embedding induced by line bundle $L$, which is of Euler type at $x\in Z$. Thus, $Z$ is an Euler-symmetric projective variety.
  \end{exam}

  \begin{exam}
    Example 3.14 of \cite{FH20} shows that a nondegenerate hypersurface is Euler-symmetric if and only if it is a nondegenerate hyperquadric.
  \end{exam}

  \begin{defn}
    For a symbol system $\textbf{F} = \oplus_{k\geq 0}F^k$ of rank $r$, define the projective algebraic subset $\textbf{Bs}(F^k) \subset \mathbb{P}W$ by the affine cone in $W$
    \[
      \{w\in W\mid \phi(w, \cdots, w) = 0,\; \forall \phi \in F^k\}
    \]
    By the definition of a symbol system, we have the inclusion $ \textbf{Bs}(F^k) \subset \textbf{Bs}(F^{k+1})$ for each $k\in \mathbb{N}$. The \textit{base loci} of $\textbf{F}$ is the nonempty projective algebraic subset
    \[
      \textbf{Bs}(\textbf{F}) = \bigcap_{\textbf{Bs}(F^k) \neq \emptyset, k \leq r+1}\textbf{Bs}(F^k).
    \]
  \end{defn}

  \begin{defn}\label{varofline}
    Let $Z\subset \mathbb{P}^N$ be a connected equidimensional nondegenerate projective variety of dimension $n\geq 1$, $x \in Z$ a general smooth point, the variety $$\mathcal{L}_x(Z) = \{[l]\mid x\in l \subset Z, l \text{ is line in } \mathbb{P}^N\} $$ is called the \textit{variety of lines} through $x$. 
    Note that $\mathcal{L}_x(Z)$ is naturally embedded in $\mathbb{P}^{n-1}$ which is the space of tangent directions at $x$. For more details about variety of lines, we refer readers to Section 2.2 of \cite{russo2016geometry}.
  \end{defn}

\section{Euler-symmetric varieties of rank $\geq 3$}\label{sect3}
  Let $W$ be a vector space of dimension $n$, and let $\textbf{F} = \oplus_{k=0}^r F^k\subset \operatorname{Sym}(W^*)$ be a symbol system of rank $r\geq 3$, where $F^k = \left\langle Q^{(k)}_1, \cdots, Q^{(k)}_{s_k} \right\rangle$ is the vector subspace of $\operatorname{Sym}^kW^*$ generated by $Q^{(k)}_1, \cdots, Q^{(k)}_{s_k}$, for $r\geq k\geq 2$ and $F^1 = W^*$.
  Write the coordinates of $\mathbb{P}V_{\textbf{F}}$ as $[z_0:z_1:\cdots:z_n:w^{(2)}_1:\cdots:w^{(2)}_{s_2}:\cdots:w^{(r)}_1:\cdots:w^{(r)}_{s_r}]$, and let
  $$\mathcal{I} =I(M(\textbf{F})) \subset \mathbb{C}[z_0, z_1, \cdots, z_n, w^{(2)}_1, \cdots, w^{(2)}_{s_2}, \cdots, w^{(r)}_1, \cdots, w^{(r)}_{s_r}]$$
  be the defining ideal of $M(\textbf{F})$.
  By definition \ref{rank}, the codimension of $M(\textbf{F})$ in $\mathbb{P}V_{\textbf{F}}$ is $m = N - n = \sum_{i=2}^r \operatorname{dim}(F^i) = \sum_{i =2}^r s_i$.

  By the prolongation property of Definition \ref{d.symbol}, the following $m$ polynomials of degree $2$ lie in $\mathcal{I}$:
  \begin{align*}
    f_1^{(2)} &= z_0w_1^{(2)} - Q_1^{(2)}(z_1, \cdots, z_n);\\
      & \vdots \\
    f_{s_2}^{(2)} &= z_0w_{s_2}^{(2)} - Q_{s_2}^{(2)}(z_1, \cdots, z_n);\\
      & \vdots \\
    f_{i}^{(j)} &= z_0w_{i}^{(j)} - \sum_{l = 1}^{s_{j-1}}g_l^{(j-1)}(z_1, \cdots, z_n) w_l^{(j-1)} ;\\
  \end{align*}
  where $2 < j \leq r$, $1\leq i \leq s_j$ and $g_l^{(j-1)}$ are linear polynomials satisfying the following Euler identity:
  \[
      Q^{(j)}_i = \sum_{l = 1}^{s_{j-1}}g_l^{(j-1)} Q_l^{(j-1)}.
  \]

  \begin{proposition}\label{prop31}
    An Euler-symmetric complete intersection is a complete intersection of hyperquadrics defined as above.
  \end{proposition}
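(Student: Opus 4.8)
The plan is to work with the homogeneous ideal $\mathcal{I} = I(M(\textbf{F}))$ one graded piece at a time, comparing the dimension of its degree-two part $\mathcal{I}_2$ with the number of generators forced by the complete intersection hypothesis. Write $S = \mathbb{C}[z_0, \ldots, z_n, w_1^{(2)}, \ldots, w_{s_r}^{(r)}]$ for the homogeneous coordinate ring of $\mathbb{P}V_{\textbf{F}}$, with irrelevant maximal ideal $\mathfrak{m} = S_+$, and recall that the codimension of $M(\textbf{F})$ is $m = N-n = \sum_{i=2}^r s_i$.

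First I would record two facts about $\mathcal{I}_2$. On one hand, the $m$ quadrics $f_i^{(j)}$ exhibited above all lie in $\mathcal{I}$, and they are linearly independent: each $f_i^{(j)}$ carries the monomial $z_0 w_i^{(j)}$ with a distinct coordinate $w_i^{(j)}$ in the leading slot, whereas every other monomial appearing in the quadrics (those in $Q_i^{(2)}$, which involve only $z_1, \ldots, z_n$, and those in $\sum_l g_l^{(j-1)} w_l^{(j-1)}$, which are of the form $z_a w_l^{(j-1)}$ with a strictly smaller superscript $j-1$) can never equal such a monomial, so no cancellation is possible. Hence $\dim_{\mathbb{C}} \mathcal{I}_2 \geq m$. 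On the other hand, $M(\textbf{F}) \subset \mathbb{P}V_{\textbf{F}}$ is nondegenerate (Definition \ref{rank}), so $\mathcal{I}$ contains no linear forms, i.e.\ $\mathcal{I}_1 = 0$; together with $\mathcal{I}_0 = 0$ this yields $(\mathfrak{m}\mathcal{I})_2 = S_1\mathcal{I}_1 + S_2\mathcal{I}_0 = 0$, whence $(\mathcal{I}/\mathfrak{m}\mathcal{I})_2 = \mathcal{I}_2$. By graded Nakayama, the number of degree-two elements in any minimal homogeneous generating set of $\mathcal{I}$ equals $\dim_{\mathbb{C}} \mathcal{I}_2$.

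Now I would bring in the hypothesis that $M(\textbf{F})$ is a complete intersection: by definition $\mathcal{I}$ is generated by $m$ elements, so its minimal number of generators satisfies $\mu(\mathcal{I}) = \dim_{\mathbb{C}} \mathcal{I}/\mathfrak{m}\mathcal{I} \leq m$. Combining this with the previous paragraph gives
\[
m \;\leq\; \dim_{\mathbb{C}} \mathcal{I}_2 \;=\; \dim_{\mathbb{C}}(\mathcal{I}/\mathfrak{m}\mathcal{I})_2 \;\leq\; \mu(\mathcal{I}) \;\leq\; m,
\]
forcing equality throughout. Thus $\dim_{\mathbb{C}} \mathcal{I}_2 = m$, every minimal generator of $\mathcal{I}$ lies in degree two, and the $m$ linearly independent quadrics $f_i^{(j)}$ form a basis of the $m$-dimensional space $\mathcal{I}_2$. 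Since a minimal homogeneous generating set then also spans $\mathcal{I}_2$, both sets generate the same ideal, and I conclude $\mathcal{I} = (f_1^{(2)}, \ldots, f_{s_r}^{(r)})$; that is, $M(\textbf{F})$ is the complete intersection of the hyperquadrics defined above.

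The calculation is light, and the step deserving the most care is the bookkeeping in the second paragraph: verifying that the $f_i^{(j)}$ are genuinely linearly independent, with no cancellation between the $z_0 w$ term of one quadric and the $g\cdot w$ terms of another, which is what secures the lower bound $\dim_{\mathbb{C}}\mathcal{I}_2 \geq m$. Everything else is the standard identification of minimal generators with $\mathcal{I}/\mathfrak{m}\mathcal{I}$ via graded Nakayama, and the only structural input beyond the explicit quadrics is the nondegeneracy $\mathcal{I}_1 = 0$, which is precisely what annihilates $(\mathfrak{m}\mathcal{I})_2$ and allows the degree-two generators to be counted by $\dim_{\mathbb{C}}\mathcal{I}_2$.
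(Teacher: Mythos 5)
Your proof is correct and follows essentially the same route as the paper: both arguments pit the $m$ linearly independent quadrics $f_i^{(j)}$ against the $m$ generators permitted by the complete-intersection hypothesis, use nondegeneracy to rule out generators of degree $<2$, and conclude by a dimension count that the $f_i^{(j)}$ must generate $\mathcal{I}$. Your graded-Nakayama framing (and explicit check that the $f_i^{(j)}$ are independent via the monomials $z_0w_i^{(j)}$) is merely a more formal rendering of the paper's ``by dimension reason'' step.
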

  \begin{proof}
    Let $M(\textbf{F})\subset \mathbb{P}V_{\textbf{F}}$ be an Euler-symmetric complete intersection corresponding to the symbol system $\textbf{F}$. Since $M(\textbf{F})\subset \mathbb{P}V_{\textbf{F}}$ is nondegenerate, all polynomials $g \in\mathcal{I}$ have $\operatorname{deg}(g) \geq 2$. Since $M(\textbf{F})$ is a complete intersection, there exist $m = \operatorname{codim}_{\mathbb{P}V_{\textbf{F}}}(M(\textbf{F}))$ homogeneous polynomials $g_i\in \mathcal{I}$ such that
    \[
      \mathcal{I} = (g_1, \cdots, g_m).
    \]
    Therefore, we have the inclusion
    \[
      \left\langle f_i^{(j)}\mid 2 \leq j \leq r,\; 1\leq i \leq s_j \right\rangle \subset \left\langle g_1\cdots, g_m \right\rangle,
    \]
    where $\left\langle S \right\rangle$ is the vector space generated by elements in $S$. By dimension reason, the inclusion is an equality, i.e., $\mathcal{I} =(f_i^{(j)}\mid 2 \leq j \leq r,\; 1\leq i \leq s_j))$.
  \end{proof}

  Let $\mathcal{J}_1$ be the ideal generated by $f_i^{(j)},\; 2 \leq j \leq r,\; 1\leq i \leq s_j$, $\mathcal{J} = \operatorname{rad}(\mathcal{J}_1)\subset \mathcal{I}$.

  \begin{lemma}\label{lemma31}
    Let $G$ be a nonzero homogeneous polynomial in
    $$\mathbb{C}[z_1, \cdots, z_n, w^{(r-1)}_1, \cdots, w^{(r-1)}_{s_{r-1}}, w_1^{(r)}]$$ such that $G = G_0 + w_1^{(r)}G_1$ and $0 \neq G_0 \in \mathbb{C}[w^{(r-1)}_1, \cdots, w^{(r-1)}_{s_{r-1}}]$.
    If $G \in \mathcal{I}$ and $s_r = 1$, then $G \notin \mathcal{J}$.
  \end{lemma}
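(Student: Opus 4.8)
The plan is to find a single prime ideal wedged between $\mathcal{J}$ and $G$: concretely, to show $\mathcal{J} \subseteq \mathfrak{p}$ while $G \notin \mathfrak{p}$, for the coordinate ideal $\mathfrak{p} := (z_0, z_1, \ldots, z_n)$. Since $\mathcal{J} = \operatorname{rad}(\mathcal{J}_1)$, the first inclusion reduces to checking the generators $f_i^{(j)}$ against $\mathfrak{p}$, and the second is a direct inspection of the shape of $G$.

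First I would record the decisive structural fact that every generator of $\mathcal{J}_1$ lies in $\mathfrak{p}$. Inspecting the list, $f_l^{(2)} = z_0 w_l^{(2)} - Q_l^{(2)}(z_1, \ldots, z_n)$ has its first monomial divisible by $z_0$ while $Q_l^{(2)}$ is homogeneous of degree $2$ in $z_1, \ldots, z_n$; similarly every monomial of $f_i^{(j)} = z_0 w_i^{(j)} - \sum_l g_l^{(j-1)}(z_1, \ldots, z_n)\, w_l^{(j-1)}$ carries either $z_0$ or a linear factor $g_l^{(j-1)}$ in $z_1, \ldots, z_n$. Thus each $f_i^{(j)}$ is a sum of monomials each divisible by some $z_\alpha$, whence $\mathcal{J}_1 \subseteq \mathfrak{p}$; equivalently, the linear subspace $\{z_0 = \cdots = z_n = 0\}$ is contained in $V(\mathcal{J}_1)$. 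As $\mathfrak{p}$ is generated by a subset of the coordinates it is prime, hence radical, and containing $\mathcal{J}_1$ it contains $\mathcal{J} = \operatorname{rad}(\mathcal{J}_1)$.

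It then remains to verify $G \notin \mathfrak{p}$. A polynomial lies in $\mathfrak{p}$ exactly when each of its monomials is divisible by some $z_\alpha$. Writing $G = G_0 + w_1^{(r)} G_1$ with $0 \neq G_0 \in \mathbb{C}[w_1^{(r-1)}, \ldots, w_{s_{r-1}}^{(r-1)}]$, the monomials of $G_0$ involve no $z_\alpha$ (and no $w_1^{(r)}$); since every monomial of $w_1^{(r)} G_1$ carries a positive power of $w_1^{(r)}$, no cancellation with $G_0$ is possible and the $z$-free monomials of $G_0$ survive in $G$. Hence $G \notin \mathfrak{p}$, and therefore $G \notin \mathcal{J}$.

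I expect no genuine obstacle: the whole argument rests on the single observation that the defining quadrics are assembled from $z$-linear and $z$-quadratic pieces, confining $\mathcal{J}$ to $\mathfrak{p}$, whereas the hypothesis $0 \neq G_0 \in \mathbb{C}[w^{(r-1)}]$ pushes $G$ out of $\mathfrak{p}$. The only step needing a moment's care is the non-cancellation between $G_0$ and $w_1^{(r)} G_1$, which is immediate on comparing $w_1^{(r)}$-degrees. I would also note that the hypotheses $s_r = 1$ and $G \in \mathcal{I}$ are not actually invoked in this implication; they fix the context in which the lemma is applied, where $G$ will in addition be manufactured inside $\mathcal{I}$, so that $G \in \mathcal{I} \setminus \mathcal{J}$ certifies $\mathcal{I} \neq \mathcal{J}$ and thereby the failure of the complete-intersection property.
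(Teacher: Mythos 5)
Your proof is correct, and it takes a genuinely different and in fact simpler route than the paper's. The paper argues by contradiction: assuming $G \in \mathcal{J}$, it passes to a power $F = G^k \in \mathcal{J}_1$, writes an explicit representation $F = \sum_{i,j} h_i^{(j)} f_i^{(j)}$, reduces modulo $z_0$ and then modulo $w_1^{(r)}$ to isolate $F_0 = \sum_{i,j} H_i^{(j)} \overline{f_i^{(j)}}$, and finally derives a contradiction from the weighted degree $\operatorname{deg}_t$ (with $\operatorname{deg}_t(w_i^{(j)}) = r-j$, $\operatorname{deg}_t(z_i)=r-1$, $\operatorname{deg}_t(z_0)=r$), where the inequality $s-2+2r-j > s$ crucially requires $r \geq 3$. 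You instead trap $\mathcal{J}$ inside the coordinate prime $\mathfrak{p} = (z_0,\ldots,z_n)$: every monomial of every generator $f_i^{(j)}$ carries a $z$-variable (the $Q_l^{(2)}$ are quadratic forms and the $g_l^{(j-1)}$ are linear forms in $z_1,\ldots,z_n$, by homogeneity of the $f_i^{(j)}$), so $\mathcal{J}_1 \subseteq \mathfrak{p}$, and primality of $\mathfrak{p}$ absorbs the radical; meanwhile $G \notin \mathfrak{p}$ because $G_0 \neq 0$ cannot cancel against $w_1^{(r)}G_1$. Geometrically, you are observing that $V(\mathcal{J}_1)$ contains the linear span of the $w$-coordinates $\{z_0=\cdots=z_n=0\}$ while $G$ does not vanish identically there. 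What your argument buys is significant: it avoids the power $G^k$, the representation bookkeeping, and the weighted degree entirely; it needs neither $s_r=1$ nor $G \in \mathcal{I}$, as you note; and it does not need $r \geq 3$, only that the $w^{(r-1)}$-variables are distinct from the $z$-variables. This last point matters beyond aesthetics, because the paper later invokes ``Case (1) of the proof of Theorem \ref{thm1}'' in Corollary \ref{bzy} for rank-$2$ symbol systems with $G \in \mathbb{C}[w_1^{(2)},\ldots,w_{s_2}^{(2)}]$, a situation the paper's own degree argument does not literally cover (when $r=j=2$ the key inequality degenerates to $s \geq s$), whereas your prime-ideal argument applies verbatim there.
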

  \begin{proof}
    Define $\operatorname{deg}_t(w_i^{(j)}) = r-j$, $\operatorname{deg}_t(z_i) = r-1$, $\operatorname{deg}_t(z_0) = r$. If $G \in \mathcal{J}$, then there exists a positive integer $k$ such that $F = G^k = F_0 + w_1^{(r)}F_1 \in \mathcal{J}_1$, where $0 \neq F_0 \in \mathbb{C}[w^{(r-1)}_1, \cdots, w^{(r-1)}_{s_{r-1}}]$. Let $\operatorname{deg}(F_0) = s$, then $\operatorname{deg}_t(F_0) = s$ and $\operatorname{deg}_t(f_i^{(j)}) = 2r-j$.
    Therefore, there exist $m$ homogeneous polynomials $h_i^{(j)}$ of degree $s-2$ in $\mathbb{C}[z_0, z_1, \cdots, z_n, w^{(2)}_1, \cdots, w^{(2)}_{s_2}, \cdots, w^{(r-1)}_1, \cdots, w^{(r-1)}_{s_{r-1}}, w_1^{(r)}]$ such that
    \[
       F = \sum_{i, j} h_i^{(j)} f_i^{(j)}.
    \]
    Let $f_i^{(j)} = z_0w_i^{(j)} + \overline{f_i^{(j)}} $, $h_i^{(j)} = \overline{h_i^{(j)}} + z_0 e_i^{(j)} $, where $$\overline{h_i^{(j)}} \in \mathbb{C}[z_1, \cdots, z_n, w^{(2)}_1, \cdots, w^{(2)}_{s_2}, \cdots, w^{(r)}_1].$$
    Then we have
    \begin{align*}
      F = & \sum_{i, j} h_i^{(j)} f_i^{(j)} \\
        = & \sum_{i,j} \overline{h_i^{(j)}} \cdot \overline{f_i^{(j)}} + z_0\sum_{i, j}(z_0e_i^{(j)}w_i^{(j)} + \overline{h_i^{(j)}} w_i^{(j)} + \overline{f_i^{(j)}} e_i^{(j)}).
    \end{align*}
    Since
    \[
      F,\; \overline{h_i^{(j)}},\; \overline{f_i^{(j)}} \in \mathbb{C}[z_1, \cdots, z_n, w^{(2)}_1, \cdots, w^{(2)}_{s_2}, \cdots, w^{(r)}_1],
    \]
    then $F = \sum_{i, j} \overline{h_i^{(j)}} \cdot \overline{f_i^{(j)}}$.

    Let $\overline{h_i^{(j)}}= H_i^{(j)} + w_1^{(r)}l_i^{(j)}$, where $$H_i^{(j)} \in \mathbb{C}[z_1, \cdots, z_n, w^{(2)}_1, \cdots, w^{(2)}_{s_2}, \cdots, w^{(r-1)}_{s_{r-1}}].$$
    Hence,
    \begin{align*}
      F = & F_0 + w_1^{(r)}F_1 \\
        = & \sum_{i, j} \overline{h_i^{(j)}}\cdot \overline{f_i^{(j)}} \\
        = & \sum_{i,j} H_i^{(j)} \overline{f_i^{(j)}} + w_1^{(r)}\sum_{i, j}(l_i^{(j)}\overline{f_i^{(j)}} ).
    \end{align*}
    This implies $F_0 = \sum_{i, j} H_i^{(j)} \overline{f_i^{(j)}}$.

    Let $H_i^{(j)} = \sum_{a} H_{i, a}^{(j)}$ be the homogeneous decomposition with respect to $t$, where $H_{i, a}^{(j)}$ is a homogeneous polynomial, and $\operatorname{deg}_t(H_{i, a}^{(j)}) = a$.
    Note that $H_{i, a}^{(j)}$ are also homogeneous polynomials of degree $s-2$ and
    \[
      H_{i, a}^{(j)} \in \mathbb{C}[z_1, \cdots, z_n, w^{(2)}_1, \cdots, w^{(2)}_{s_2}, \cdots, w^{(r-1)}_{s_{r-1}}].
    \]
    This implies $a = \operatorname{deg}_t(H_{i, a}^{(j)}) \geq s-2$.
    Then
    \begin{align*}
      F_0 =& \sum_{i, j} H_i^{(j)} \overline{f_i^{(j)}}\\
         = &  \sum_{i, j} \sum_a H_{i, a}^{(j)} \overline{f_i^{(j)}},
    \end{align*}
    where $\operatorname{deg}_t(H_{i, a}^{(j)} \overline{f_i^{(j)}}) \geq s-2 + 2r-j$ for every $i, j$. Since $r\geq 3$, we have $s-2 + 2r-j \geq s + 1 > s$. Then
    $$s = \operatorname{deg}_t(F_0) =\operatorname{deg}_t(\sum_{i, j, a} H_{i, a}^{(j)} \overline{f_i^{(j)}}) > s ,$$ which is a contradiction.
  \end{proof}

  \begin{lemma}\label{lemma32}
    Let $F\in \mathbb{C}[x_1, \cdots, x_n]$ be a homogeneous polynomial of degree $m$. If $V = \left\langle \partial_{x_1}F, \cdots, \partial_{x_n}F\right\rangle$ is a vector space of dimension $k\leq n$, then there exist $k$ linearly independent linear homogeneous polynomials $f_1, \cdots, f_k$ such that :
    \[
      F \in \mathbb{C}[f_1, \cdots, f_k]_m
    \]
    where $\mathbb{C}[f_1, \cdots, f_k]_m$ is the collection of all homogeneous polynomials of degree $m$ with respect to $f_1, \cdots, f_k$.
  \end{lemma}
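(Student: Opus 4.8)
The plan is to read the hypothesis $\dim V = k$ as the statement that, after a linear change of coordinates, $F$ depends on only $k$ of the variables. First I would record that the $n$ partials $\partial_{x_1}F, \ldots, \partial_{x_n}F$ span a space of dimension $k$, so there are exactly $n-k$ linearly independent linear relations among them:
\[
  \sum_{i=1}^n c_{j,i}\,\partial_{x_i}F = 0, \qquad j = 1, \ldots, n-k,
\]
with the coefficient vectors $v_j = (c_{j,1}, \ldots, c_{j,n}) \in \mathbb{C}^n$ linearly independent. Each such relation is precisely the vanishing of the directional derivative $D_{v_j}F := \sum_i c_{j,i}\,\partial_{x_i}F$, so $F$ is annihilated by differentiation in each direction $v_j$, and hence, by linearity, in every direction of the $(n-k)$-dimensional subspace $U = \langle v_1, \ldots, v_{n-k}\rangle \subset \mathbb{C}^n$.

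The key step is to turn this translational invariance into a statement about variables. I would choose a basis $e_1, \ldots, e_n$ of $\mathbb{C}^n$ whose last $n-k$ vectors $e_{k+1}, \ldots, e_n$ form a basis of $U$, and let $y_1, \ldots, y_n$ be the dual linear coordinates. Writing $x = \sum_j y_j e_j$, the chain rule gives $\partial_{y_j} = D_{e_j}$, so for $j = k+1, \ldots, n$ one gets $\partial_{y_j}F = D_{e_j}F = 0$. Over $\mathbb{C}$ a polynomial with vanishing $\partial_{y_j}$ contains no monomial involving $y_j$; applying this for every $j > k$ shows that $F$ lies in $\mathbb{C}[y_1, \ldots, y_k]$. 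Setting $f_i := y_i$ for $i = 1, \ldots, k$ then produces $k$ linearly independent linear forms, and since $F$ is homogeneous of degree $m$ we conclude $F \in \mathbb{C}[f_1, \ldots, f_k]_m$, as desired.

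The only genuinely delicate point is the passage from ``$D_v F = 0$ for all $v \in U$'' to ``$F$ omits the transverse variables'', that is, the coordinate-change step; this is where working over $\mathbb{C}$ (characteristic zero) is essential, since in positive characteristic a derivative can vanish without the variable being absent. Everything else is linear algebra: counting the relations to obtain $\dim U = n-k$, and checking that the dual forms are independent. One may optionally record the converse bound — if $F$ were a polynomial in fewer than $k$ independent linear forms, the chain rule would force $\dim V < k$ — which shows that $k$ is in fact the minimal number of variables, although the statement only requires exhibiting $k$ of them.
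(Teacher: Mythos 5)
Your proof is correct, but it follows a genuinely different route from the paper. You pass to the kernel of the map $c \mapsto D_cF = \sum_i c_i\,\partial_{x_i}F$ (which has dimension $n-k$), perform a linear change of coordinates adapted to that kernel, and invoke the characteristic-zero fact that a polynomial with identically vanishing $\partial_{y_j}$ contains no monomial in $y_j$; this is the standard ``essential variables'' argument, and it is clean and conceptually transparent. The paper instead stays in the original coordinates: writing $\partial_jF = \sum_{i\leq k} a_{ij}\partial_iF$ for $j>k$, it uses the Euler identity to get $mF = \sum_{i=1}^k f_i\,\partial_iF$ with $f_i = x_i + \sum_{j>k}a_{ij}x_j$, then proves by induction on the order of differentiation that $(m-s)\,\partial_{i_1}\cdots\partial_{i_s}F = \sum_j f_j\,\partial_{i_1}\cdots\partial_{i_s}\partial_jF$, and finally descends from the top-order (constant) derivatives to conclude $F \in \mathbb{C}[f_1,\ldots,f_k]_m$. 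It is worth noting that the two constructions yield literally the same linear forms: the paper's $f_i$ are exactly the dual coordinates of the basis you build by extending a basis of the kernel (with kernel vectors $e_j - \sum_{i\leq k} a_{ij}e_i$). What your approach buys is brevity and a clear identification of where characteristic zero enters; what the paper's buys is a self-contained computation via the Euler identity that never needs the coordinate-change/dual-basis bookkeeping, at the cost of a longer induction.
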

  \begin{proof}
    Write $\partial_{j}F = \partial_{x_j}F$. We may assume that $V = \left\langle \partial_1F, \cdots, \partial_kF\right\rangle$. By Euler identity, we have
    \[
      mF = \sum_{i=1}^{n}x_i \partial_iF = \sum_{i=1}^kx_i\partial_iF + \sum_{j=k+1}^nx_j\partial_jF.
    \]
    Since $\partial_jF \in V$, for $k+1 \leq j \leq n$, then $\partial_jF = \sum_{i=1}^ka_{ij}\partial_iF$. Therefore, we have
    \[
        mF = \sum_{i=1}^{k}f_i \partial_iF, \quad f_i = x_i + \sum_{j>k}a_{ij}x_j,
    \]
    and $f_i$ are linearly independent linear polynomials.

    We claim $$(m-s)\partial_{i_1}\partial_{i_2}\cdots\partial_{i_s} F = \sum_{j=1}^{k}f_j \partial_{i_1}\partial_{i_2}\cdots\partial_{i_{s}}\partial_j F,$$ for all $s < m$, $1 \leq i_1, \ldots, i_s \leq k$. 

    For $s = 1$, we have
    \begin{align*}
      m\partial_iF &= \sum_{j =1}^{k}(f_j\partial_i(\partial_jF) + \partial_i(f_j)\partial_jF);\\
      & = \sum_{j =1}^{k}(f_j\partial_i(\partial_jF) + \delta_{ij}\partial_jF); \\
      & =\partial_iF + \sum_{j =1}^{k}f_j\partial_i(\partial_jF).
    \end{align*}
    By induction, we have,
    \begin{align*}
      & (m-s)\partial_{i_{s+1}}\partial_{i_1}\partial_{i_2}\cdots\partial_{i_{s}} F \\
      = &\sum_{j=1}^{k}\partial_{i_{s+1}}(f_j \partial_{i_1}\partial_{i_2}\cdots\partial_{i_{s}}\partial_j F); \\
      = & \sum_{j=1}^k(f_j\partial_{i_{s+1}}\partial_{i_1}\partial_{i_2}\cdots\partial_{i_{s}}\partial_j F + (\partial_{i_{s+1}}f_j) \partial_{i_1}\partial_{i_2}\cdots\partial_{i_{s}}\partial_j F) \\
      = & \sum_{j=1}^k(f_j\partial_{i_{s+1}}\partial_{i_1}\partial_{i_2}\cdots\partial_{i_{s}}\partial_j F + \delta_{ji_{s+1}} \partial_{i_1}\partial_{i_2}\cdots\partial_{i_{s}}\partial_j F) \\
      = & \partial_{i_1}\cdots\partial_{i_{s}}\partial_{i_{s+1}} F + \sum_{j=1}^kf_j\partial_{i_{s+1}}\partial_{i_1}\partial_{i_2}\cdots\partial_{i_{s}}\partial_j F, \\
    \end{align*}
    where $1\leq i_{s+1}\leq k$, $s+1 < m$. This proves the claim.

    As $\operatorname{deg}F = m$, $\partial_{i_1}\partial_{i_2}\cdots\partial_{i_{m-1}}\partial_jF$ are constant for all $1 \leq i_1, \ldots, i_{m-1}, j \leq k$.
    Then by the claim,
    \[
      \partial_{i_1}\partial_{i_2}\cdots\partial_{i_{m-1}} F = \sum_{j=1}^{k}f_j \partial_{i_1}\partial_{i_2}\cdots\partial_{i_{s}}\partial_jF \in \mathbb{C}[f_1, \cdots, f_k]_1.
    \]
    Therefore, $F \in \mathbb{C}[f_1, \cdots, f_k]_m$ by induction which concludes the proof.
  \end{proof}

  Now, we can prove the first main theorem of this paper.
  \begin{proof}[Proof of Theorem \ref{thm1}]
    Assume $\operatorname{rank}(\textbf{F}) = r \geq 3$. Consider the Euler-symmetric projective variety $Z = M(\textbf{F})$ and set $m = \operatorname{codim}_{\mathbb{P}V_{\textbf{F}}}(M(\textbf{F}))$. Since Euler-symmetric projective varieties are nondegenerate, all homogeneous polynomials $g \in \mathcal{I}$ have $\operatorname{deg}(g) \geq 2$.
    Let $S$ be a finite generating set of $\mathcal{I}$, i.e., $\mathcal{I}= (S)$.  And let $\left\langle S \right\rangle$ be the vector space generated by elements in $S$, then $f_i^{(j)} \in \left\langle S \right\rangle$. Thus, $\operatorname{dim}(\left\langle S \right\rangle) \geq m$. If $\operatorname{dim}(\left\langle S \right\rangle) > m$ for all finite generating sets $S$ of $\mathcal{I}$, then $M(\textbf{F})$ is not a complete intersection.
	  Thus, if we can find one nonzero homogeneous polynomial $G \in \mathcal{I} \backslash \mathcal{J}$, then $\operatorname{dim}(\left\langle S \right\rangle) > m$ for all generating sets of $\mathcal{I}$ with finite elements. 
    
    Moreover, if we can find one nonzero homogeneous polynomial 
    \[
      G \in \mathcal{I} \cap \mathbb{C}[z_0, z_1, \cdots, z_n, w^{(2)}_1, \cdots, w^{(2)}_{s_2}, \cdots, w^{(r-1)}_{s_{r-1}}, w^{(r)}_{1}],
    \]
    and 
    \[
      G \notin \mathcal{J} \cap\mathbb{C}[z_0, z_1, \cdots, z_n, w^{(2)}_1, \cdots, w^{(2)}_{s_2}, \cdots, w^{(r-1)}_{s_{r-1}}, w^{(r)}_{1}],
    \]
    then $G \notin \mathcal{J}$.
    Therefore, we can also assume that $F^r = \left\langle T \right\rangle$, where $T\in \operatorname{Sym}^rW^*$.

    If $\operatorname{dim}(M(\textbf{F})) = 1$, then $F^r = \left\langle x^r \right\rangle$ and $M(\textbf{F})$ is a rational normal curve in $\mathbb{P}V_{\textbf{F}}$, namely the closure of $\{[t^r:t^{r-1}x:\cdots:tx^{r-1}:x^r]\in \mathbb{P}V_{\textbf{F}} \}$, which is not a complete intersection as $r\geq 3$.

    If $\operatorname{dim}(M(\textbf{F})) = n \geq 2$, then we have the following cases:
    \begin{enumerate}
      \item[Case (1)] $\operatorname{dim}(F^{r-1}) = s_{r-1} > n$: \\
      we know that $\{Q^{(r-1)}_1, \cdots, Q^{(r-1)}_{s_{r-1}}\}$ are algebraically dependent, hence there exists a polynomial $G \in \mathbb{C}[y_1, \cdots, y_{s_{r-1}}]$ such that
      $$G(Q^{(r-1)}_1, \cdots, Q^{(r-1)}_{s_{r-1}}) \equiv 0.$$
      Since $Q^{(r-1)}_1, \cdots, Q^{(r-1)}_{s_{r-1}}$ are all homogeneous polynomials of degree $r-1$, we can assume that $G \in \mathbb{C}[y_1, \cdots, y_{s_{r-1}}]$ is also homogeneous. Hence, from Lemma \ref{lemma31} we have a polynomial $G(w^{(r-1)}_1, \cdots, w^{(r-1)}_{s_{r-1}}) \in \mathcal{I}\backslash \mathcal{J}$.

      \item[Case (2)] $\operatorname{dim}(F^{r-1}) = s_{r-1} = n$: \\
      Since $\{Q^{(r-1)}_1, \cdots, Q^{(r-1)}_n, T\}$ is algebraically dependent, then there exists a polynomial $G\in \mathbb{C}[y_1, \cdots, y_n, y]$ such that
      $$G(Q^{(r-1)}_1, \cdots, Q^{(r-1)}_n, T) \equiv 0.$$
      Rewrite $G$ as
      \[
        G = a_my^m + \cdots + a_1y + a_0,
      \]
      where $a_j$ are polynomials of $y_1, \cdots, y_n$. Define $\operatorname{deg}_xy_i = r-1,\; \operatorname{deg}_xy = r$. Then $\operatorname{deg}_x G$ makes sense, and we say it is the degree of $G$ w.r.t $x_1, \cdots, x_n$.

      Firstly, we will reduce $G$ to a homogeneous polynomial w.r.t $x_1, \cdots, x_n$ and $a_j$ to homogeneous polynomials.
      Let $a_j = \sum a_j^i$ be the homogeneous decomposition in $\mathbb{C}[y_1, \cdots, y_n]$.
      Since $y_1, \cdots, y_n$ are homogeneous polynomials of degree $r-1$ w.r.t. $x_1, \cdots, x_n$, $a_j^i$ are homogeneous w.r.t $x_1, \cdots, x_n$.
      Then we have the following homogeneous decomposition w.r.t $x_1, \cdots, x_n$:
      \begin{align*}
        &G(y_1, \cdots, y_n, y) \\
        =& \sum a_m^i y^m + \cdots + \sum a_1^i y + \sum a_0^i \\
        =& G_1 + G_2 + \cdots
      \end{align*}
      where $G_i = a_m^{i_m}y^m + \cdots + a_1^{i_1}y + a_0^{i_0}$ are homogeneous polynomials w.r.t $x_1, \cdots x_n$ and $a_j^{i_j}$ are homogeneous polynomials

      As $G(Q^{(r-1)}_1, \cdots, Q^{(r-1)}_n, T)$ is a zero polynomial, we get $$G_i(Q^{(r-1)}_1, \cdots, Q^{(r-1)}_n, T)$$ are also zero polynomials. Therefore, we can assume that $G$ is homogeneous w.r.t $x_1, \cdots, x_n$ and $a_j$ are homogeneous polynomials.

      Secondly, without loss of generality, we may assume $a_0 \neq 0$, otherwise, we consider $\tilde{G} = a_my^{m-1} + \cdots + a_1$, then $\tilde{G}(Q^{(r-1)}_1, \cdots, Q^{(r-1)}_n, T)$ is also a zero polynomial.

      Finally, let $h = \operatorname{deg}_x(G)$, then $(r-1)| h$ as $a_0\neq 0$, and $h = (r-1)s$, where $s = \operatorname{deg}(a_0)$. This gives
      $$h =(r-1)s = \operatorname{deg}_x(a_jy^j) = jr+(r-1)\operatorname{deg}(a_j).$$
      Hence, $(r-1)|j$, and
      \begin{align*}
        G = a_{m(r-1)} y^{m(r-1)} + \cdots + a_{2(r-1)}y^{2(r-1)} + a_{r-1}y^{r-1} + a_0.
      \end{align*}
      Therefore,
      \begin{align*}
        h = & (r-1)s = \operatorname{deg}_x(a_{j(r-1)}y^{j(r-1)}) \\
          = &j(r-1)r+(r-1)\operatorname{deg}(a_{j(r-1)}).
      \end{align*}
      Then $\operatorname{deg}(a_{j(r-1)}) = s - jr\geq 0$, for $0\leq j \leq m$. 
      By multiplying $y_1^i$ with $G$, 
      we can assume that $s = rp$, then $h = (r-1)s = (r-1)rp$ and $s - jr = rp - jr = r(p-j) \geq 0$. Note that
      \begin{align*}
        & a_{j(r-1)}(Q^{(r-1)}_1, \cdots, Q^{(r-1)}_n)T^{j(r-1)} \\
        =& a_{j(r-1)}(\frac{w^{(r-1)}_1}{t}, \cdots, \frac{w^{(r-1)}_n}{t})(w^{(r)}_1)^{j(r-1)} \\
        =& \frac{1}{t^{r(p-j)}}a_{j(r-1)}(w^{(r-1)}_1, \cdots, w^{(r-1)}_n)(w^{(r)}_1)^{j(r-1)},
      \end{align*}
      which yields that
      \begin{align*}
        & t^{rp}G(w^{(r-1)}_1, \cdots, w^{(r-1)}_n, w^{(r)}_1) \\
        =& t^{rm}a_{m(r-1)}(w^{(r-1)}_1, \cdots, w^{(r-1)}_n)(w^{(r)}_1)^{m(r-1)}+ \cdots \\
        &+ a_{0}(w^{(r-1)}_1, \cdots, w^{(r-1)}_n).
      \end{align*}
      Set
      \begin{align*}
        G^{\prime}
        = & z_0^ma_{m(r-1)}(w^{(r-1)}_1, \cdots, w^{(r-1)}_n)(w^{(r)}_1)^{m(r-1)}+\cdots \\
        &+ a_{0}(w^{(r-1)}_1, \cdots, w^{(r-1)}_n),
      \end{align*}
      $G^{\prime}$ is a homogeneous polynomial in $\mathbb{C}[z_0:z_1:\cdots:z_n:w^{(r-1)}_1:\cdots:w^{(r)}_{1}]$, and $G^{\prime} \in \mathcal{I}$. Then
      \begin{align*}
        G^{\prime}
        = & a_{m(r-1)}(w^{(r-1)}_1, \cdots, w^{(r-1)}_n)(f_1^{(r)} - \overline{f_1^{(r)}})^m(w^{(r)}_1)^{m(r-2)}+ \\
          &\vdots \\
          &+ a_{0}(w^{(r-1)}_1, \cdots, w^{(r-1)}_n)\\
        =& G_0 + f_1^{(r)}G_1 \equiv G_0 (\operatorname{mod} \mathcal{J}) ,
      \end{align*}
      where $G_0= a_{0} + w_1^{(r)}\overline{G_{0}} \in \mathcal{I}\cap \mathbb{C}[z_1, \cdots, z_n, w^{(r-1)}_1, \cdots, w^{(r-1)}_{n}, w^{(r)}_{1}]$ is a nonzero homogeneous polynomial and $\overline{f_1^{(r)}} = f_1^{(r)} - z_0w_1^{(r)}$.

      Since $a_{0}(w^{(r-1)}_1, \cdots, w^{(r-1)}_n) \neq 0$, we have $G_0 \notin \mathcal{J}$ by Lemma \ref{lemma31}.

      \item[Case (3)] $\operatorname{dim}(F^{r-1}) = s_{r-1} < n$: \\
      We have $\left\langle \partial_{x_1}T, \cdots, \partial_{x_n}T\right\rangle \subset F^{r-1}$,
      \[
        \operatorname{dim}(\left\langle \partial_{x_1}T, \cdots, \partial_{x_n}T\right\rangle) = q \leq s_{r-1} < n.
      \]
      Assume that $\left\langle \partial_{x_1}T, \cdots, \partial_{x_n}T\right\rangle = \left\langle Q^{(r-1)}_1, \cdots, Q^{(r-1)}_q \right\rangle$. By Lemma \ref{lemma32} we can assume that
      \[
        Q^{(r-1)}_1, \cdots, Q^{(r-1)}_q, T \in \mathbb{C}[x_1, \cdots, x_q].
      \]
      Similarly, by above arguments, there exists a homogeneous polynomial $G$ such that
      $$G(z_1, \cdots, z_n, w^{(r-1)}_1, \cdots, w^{(r-1)}_q, w^{(r)}_1) \in \mathcal{I} \backslash \mathcal{J}.$$
    \end{enumerate}
  \end{proof}

  \begin{exam}
    Let $F^2 =\left\langle x_1^2 + x_2^2, x_1x_2 \right\rangle $, $F^3 = \left\langle x_1^3 + 3 x_1x_2^2 \right\rangle$, and $\textbf{F} = F^0 \oplus F^1 \oplus F^2 \oplus F^3$. 
    By a direct calculation, polynomials of degree $2$ in $\mathcal{I}$ are exactly in $\left\langle f_1^{(2)}, f_2^{(2)}, f_1^{(3)} \right\rangle$,
    and $\mathcal{J} = (f_1^{(2)}, f_2^{(2)}, f_1^{(3)}, g)$, where $g = 3z_2w_1^{(2)}w_2^{(2)} - 2z_1(w_2^{(2)})^2 - z_2^2w_1^{(3)}$, then $M(\textbf{F})$ is not a complete intersection without finding one more homogeneous polynomial in $\mathcal{I} \backslash \mathcal{J}$.
  \end{exam}

  \begin{exam}
    Consider the homogeneous polynomial $P = \sum_{i=1}^n x_i^3 \in Sym^3W^*$, the symbol system $\textbf{F}_P = F^0\oplus F^1 \oplus F^2 \oplus F^3 $, where $F^2 = \left\langle x_1^2, \ldots, x^2_n\right\rangle$ and $F^3 = \left\langle P\right\rangle$. 
    The Euler-symmetric projective variety $M(\textbf{F}_P)$ defined in Example 3.10 of \cite{FH20} is exactly the projective Legendrian variety studied in Section 4.3 of \cite{LM07}. 
    The polynomial $G$ in case $(2)$ of the proof of Theorem \ref{thm1} is hard to construct. 
    However, there is a nonzero homogeneous polynomial of degree $n+1$ in $\mathcal{J}\backslash \mathcal{J}_1$, which also shows that $M(\textbf{F}_P)$ is not a complete intersection.
  \end{exam}

  \begin{exam}
    The twisted cubic curve in $\mathbb{P}^3$ is an Euler-symmetric projective variety of rank $3$, hence not a complete intersection, and its vanishing ideal can be generated by three polynomials of degree $2$.
    The rational normal curve $C^r \subset \mathbb{P}^r$ with $r\geq 3$ is an Euler-symmetric projective variety of rank $r$, hence not a complete intersection. But in 1941 Perron observed that all rational normal curves are set-theoretic complete intersections. For more details, we refer readers to \cite{buadescu2010grothendieck}, \cite{torrente2015rational}.
  \end{exam}

  \begin{cor}\label{bzy}
    Let $Z \subset \mathbb{P}^N$ be an Euler-symmetric variety of dimension $n$. If
    \[
      n < \frac{N}{2},
    \]
    then $Z$ is not a complete intersection.
  \end{cor}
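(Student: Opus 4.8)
The plan is to reduce everything to the classification of the possible ranks and then to invoke the two main theorems. First I would observe that, by Theorem \ref{thmfh20}, any Euler-symmetric $Z \subset \mathbb{P}^N$ is of the form $M(\textbf{F})$ for a symbol system $\textbf{F}$ on a vector space $W$ with $\dim W = n$, and by Definition \ref{rank} the ambient dimension is $N = n + \sum_{k=2}^r s_k$, where $s_k = \dim F^k$ and $r = \operatorname{rank}(\textbf{F})$. The hypothesis $n < N/2$ is therefore equivalent to $\sum_{k=2}^r s_k > n$.

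Next I would split into cases according to the rank. If $r = 1$, then $M(\textbf{F}) = \mathbb{P}^N$ and $N = n$, which contradicts $n < N/2$, so this case does not occur. If $r \geq 3$, then Theorem \ref{thm1} already asserts that $Z$ is not a complete intersection, and there is nothing further to prove. Thus the only substantive case is $r = 2$.

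For $r = 2$ we have $\textbf{F} = F^0 \oplus F^1 \oplus F^2$ with $F^2 = \langle Q_1, \dots, Q_c\rangle$, and the codimension of $Z$ equals $c = s_2$, so the hypothesis becomes $c > n$. Here I would invoke the equivalence $(1) \Leftrightarrow (4)$ of Theorem \ref{thm2}: $M(\textbf{F})$ is a complete intersection of codimension $c$ if and only if $(Q_1, \dots, Q_c)$ is a regular sequence in $\operatorname{Sym}(W^*) = \mathbb{C}[x_1, \dots, x_n]$. The key ring-theoretic input is that any regular sequence of homogeneous polynomials of positive degree in $\mathbb{C}[x_1, \dots, x_n]$ has length at most $n$, since each regular element cuts the Krull dimension down by exactly one and the ring has dimension $n$ (equivalently, the localization at the irrelevant maximal ideal is Cohen--Macaulay of depth $n$, and depth is bounded above by dimension). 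Because $c > n$, the sequence $(Q_1, \dots, Q_c)$ cannot be regular, and so by Theorem \ref{thm2} the variety $Z$ is not a complete intersection.

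The argument is short, and the only place requiring care is the bookkeeping identifying the codimension with $c$ and confirming that $n < N/2$ translates precisely into $c > n$ in the rank-$2$ case. The genuine mathematical content is carried entirely by Theorem \ref{thm1} together with the regular-sequence criterion of Theorem \ref{thm2}, so I expect no serious obstacle beyond citing the depth bound correctly.
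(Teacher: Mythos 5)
Your proof is correct, but in the decisive rank-$2$ case it follows a genuinely different route from the paper's. The paper argues entirely inside Section \ref{sect3}: when $r=2$ and $c=s_2>n$, the quadrics $Q_1,\dots,Q_c\in\mathbb{C}[x_1,\dots,x_n]$ are algebraically dependent, and a homogeneous relation $G(Q_1,\dots,Q_c)\equiv 0$ yields a polynomial $G(w^{(2)}_1,\dots,w^{(2)}_{s_2})\in\mathcal{I}$ which, by the weighted-degree argument behind Lemma \ref{lemma31} (this is Case (1) of the proof of Theorem \ref{thm1}), does not lie in $\mathcal{J}=\operatorname{rad}((f^{(2)}_i\mid 1\leq i\leq s_2))$; hence no finite generating set of $\mathcal{I}$ can consist of only $\operatorname{codim}$-many elements. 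You instead invoke the equivalence $(1)\Leftrightarrow(4)$ of Theorem \ref{thm2} together with the standard fact that a regular sequence of positive-degree homogeneous polynomials in $\mathbb{C}[x_1,\dots,x_n]$ has length at most $n$ (your depth/dimension justification of this bound is fine, since the $Q_i$ lie in the irrelevant ideal). This is shorter and isolates the ring-theoretic reason for failure, but it rests on Theorem \ref{thm2}, which the paper proves only later (Theorems \ref{thm41} and \ref{thm42} in Section 4), so your argument owes a non-circularity check; it does go through, because the proofs of Proposition \ref{prop41} and Theorems \ref{thm41}, \ref{thm42} never use Corollary \ref{bzy} --- the mention of it at the start of Section 4 merely motivates restricting attention to $c\leq n$ and is not a logical input. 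So your proof is valid but inverts the paper's expository order; the paper's argument buys self-containedness at that point in the text and produces an explicit element of $\mathcal{I}\setminus\mathcal{J}$, while yours buys brevity and a cleaner conceptual statement, and would be the natural proof had the corollary been placed after Section 4. Your handling of the remaining cases ($r=1$ vacuous, $r\geq 3$ via Theorem \ref{thm1}) coincides with the paper's.
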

  \begin{proof}
    If $\operatorname{rank}(Z) \geq 3$, then by Theorem \ref{thm1}, $Z$ is not a complete intersection. Therefore, we may assume that $\operatorname{rank}(Z) = 2$, then by Theorem \ref{thmfh20} the corresponding symbol system $\textbf{F} = F^0\oplus F^1 \oplus F^2$ and $s_2 = \operatorname{dim}(F^2) = \operatorname{codim}_{\mathbb{P}^N}(Z) = N - n > n$. From Case $(1)$ of the proof of Theorem \ref{thm1}, we have a homogeneous polynomial
    $$G \in \mathbb{C}[w_1^{(2)}, \cdots, w_{s_2}^{(2)}]$$
    such that $G \in \mathcal{I}$ but $G \notin \mathcal{J}$, where $\mathcal{J} = \operatorname{rad}((f_i^{(2)}\mid 1\leq i \leq s_2))$. Therefore, $Z$ is not a complete intersection either.
  \end{proof}

  \begin{rmk}
    The inequality in Corollary \ref{bzy} is optimal, namely, for every $n \geq \frac{N}{2}$, there exists an Euler-symmetric projective variety of dimension $n$ which is a complete intersection (Example \ref{exam41}).
  \end{rmk}
  
  \begin{exam}\label{exam38}
    Let $\textbf{F}$ be a symbol system of rank $2$, and let $F^2\subset \operatorname{Sym}^2W^*$ be a subspace of dimension $k$. If $k > n= \operatorname{dim}(W)$, then the corresponding Euler-symmetric variety $M(\textbf{F})$ is not a complete intersection by Corollary \ref{bzy}.
  \end{exam}

\section{Euler-symmetric varieties of rank $2$}

  Now we restrict ourselves to the case where $\textbf{F}$ is a symbol system of rank $2$. Let $F^2 = \left\langle Q_1, \cdots, Q_c \right\rangle$ and $\textbf{Bs}(\textbf{F})= V_+(Q_1, \cdots, Q_c)\subset \mathbb{P}^{n-1} = \mathbb{P}W$.
  Write the coordinates of $\mathbb{P}V_{\textbf{F}}$ as $[w_0:w_1:\cdots:w_n:w_{n+1}:\cdots:w_{N}]$ with $N=n+c$.

  Let
  \begin{align*}
    f_1 &= w_0w_{n+1} - Q_1(w_1, \cdots, w_n);\\
     & \vdots \\
    f_c &= w_0w_{N} - Q_c(w_1, \cdots, w_n);\\
  \end{align*}
  be $c$ polynomials of degree $2$, and let $Y$ be the subvariety of $\mathbb{P}V_{\textbf{F}}$ defined by those polynomials, 
  then $\operatorname{dim}(Y)\geq n$. Let $\mathcal{I} = I(M(\textbf{F}))$, $\mathcal{J} = I(Y)$ and $b = \operatorname{codim}_{\mathbb{P}W}(\textbf{Bs}(\textbf{F}))$. Corollary \ref{bzy} suggests that we can assume that $c \leq n$, and we always have $b \leq c$.

  For $c = n$, we say $\textbf{Bs}(\textbf{F})$ is a set-theoretic complete intersection of codimension $c$ if $\textbf{Bs}(\textbf{F}) = \emptyset$. Denote $\operatorname{codim}_{\mathbb{P}^{n-1}}(\textbf{Bs}(\textbf{F})) = n$ and $\operatorname{dim}(\textbf{Bs}(\textbf{F})) = -1$ if $\textbf{Bs}(\textbf{F}) = \emptyset$.
  \begin{defn}
    We say an Euler-symmetric projective variety $M(\textbf{F})$ is {\em quadratic}, if $M(\textbf{F}) = Y = V_+(f_1, \cdots, f_c)$ or equivalently $\mathcal{I} = \mathcal{J}$.
  \end{defn}

  \begin{lemma}\label{lemma41}
    If the subvariety $X = V_+(g_1, \cdots, g_c)\subset \mathbb{P}^{n+c}$ is a set-theoretic complete intersection of codimension $c$, then the sequence $(g_1, \cdots, g_c)$ of homogeneous polynomials $$g_i \in \mathbb{C}[w_0, w_1, \cdots, w_n, w_{n+1}, \cdots, w_{N}]$$ is a regular sequence.
  \end{lemma}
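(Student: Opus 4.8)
The plan is to recast the geometric hypothesis as a height statement in the homogeneous coordinate ring and then exploit the Cohen--Macaulay property of a polynomial ring. Write $R = \mathbb{C}[w_0, w_1, \ldots, w_N]$ with $N = n+c$; as a polynomial ring over a field, $R$ is Cohen--Macaulay. The assumption that $X = V_+(g_1, \ldots, g_c)$ is a set-theoretic complete intersection of codimension $c$ says exactly that $\operatorname{rad}((g_1, \ldots, g_c)) = I(X)$, so the affine cone $\widehat{X} = V(g_1, \ldots, g_c) \subset \mathbb{A}^{N+1}$ has the same dimension $n+1$ as the cone over $X$. Hence $\dim R/(g_1, \ldots, g_c) = n+1$ and $\operatorname{ht}(g_1, \ldots, g_c) = (N+1) - (n+1) = c$.

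The heart of the matter is the standard commutative-algebra fact that in a Cohen--Macaulay ring an ideal generated by $c$ elements of height $c$ is generated by a regular sequence. Since the $g_i$ are homogeneous and associated primes of graded modules are homogeneous, hence contained in the irrelevant ideal $\mathfrak{m} = (w_0, \ldots, w_N)$, it suffices to establish the regular-sequence property after localizing at $\mathfrak{m}$, where $R_{\mathfrak{m}}$ is Cohen--Macaulay and local; one may then either quote the characterization directly (e.g. Matsumura, Theorem 17.4, or Bruns--Herzog, Theorem 2.1.2) or argue by induction on $c$ as follows. Krull's height theorem gives $\operatorname{ht}(g_1, \ldots, g_i) \leq i$, while adjoining one generator raises the height by at most one, so $\operatorname{ht}(g_1, \ldots, g_i) \geq \operatorname{ht}(g_1, \ldots, g_c) - (c-i) = i$; thus each partial ideal has height exactly $i$. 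Assuming $g_1, \ldots, g_{i-1}$ is a regular sequence, the quotient $\overline{R} = R_{\mathfrak{m}}/(g_1, \ldots, g_{i-1})$ is again Cohen--Macaulay, hence equidimensional, with $\dim \overline{R}/(\overline{g_i}) = \dim \overline{R} - 1$; therefore $\overline{g_i}$ lies in no minimal prime of $\overline{R}$, and by unmixedness $\operatorname{Ass}(\overline{R}) = \operatorname{Min}(\overline{R})$, so $\overline{g_i}$ is a nonzerodivisor. This completes the induction and shows $g_1, \ldots, g_c$ is a regular sequence.

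I expect the main obstacle to be conceptual rather than computational: it is the invocation of the Cohen--Macaulay characterization ``height equal to the number of generators forces a regular sequence,'' which ultimately rests on the unmixedness theorem. The one point that genuinely requires care is the translation of \emph{set-theoretic complete intersection of codimension $c$} into the exact equality $\operatorname{ht}(g_1, \ldots, g_c) = c$: the inequality $\operatorname{ht} \leq c$ is Krull's theorem, whereas the matching lower bound is supplied by the geometric codimension of $X$, for which one must observe that replacing the ideal by its radical and passing to the affine cone leaves the dimension unchanged.
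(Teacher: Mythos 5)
Your proof is correct, but it is organized rather differently from the paper's. The paper argues by contradiction, entirely in the language of projective geometry: if $(g_1,\ldots,g_c)$ were not regular, take the longest regular initial segment $(g_1,\ldots,g_j)$; then every irreducible component of $V_+(g_1,\ldots,g_j)$ has dimension $n+c-j$, the zero-divisor $g_{j+1}$ must vanish identically on one such component $Z_1$ (so $Z_1$ persists as a component of $V_+(g_1,\ldots,g_{j+1})$ of undiminished dimension), and the remaining $c-j-1$ hypersurfaces can cut the dimension down by at most one each, leaving a component of $X$ of dimension at least $n+1$, contradicting codimension $c$. Your argument runs in the direct rather than contrapositive direction and in local commutative algebra: you convert the hypothesis into the single equality $\operatorname{ht}(g_1,\ldots,g_c)=c$, reduce to the local ring $R_{\mathfrak{m}}$ using homogeneity of associated primes, and run the standard Cohen--Macaulay induction (each partial ideal has height exactly $i$; the CM quotient is equidimensional with $\operatorname{Ass}=\operatorname{Min}$; a dimension-dropping element is therefore a nonzerodivisor). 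The underlying pillars are identical in both proofs---Krull's height theorem plus unmixedness of quotients of a Cohen--Macaulay ring by a regular sequence---but your write-up makes those inputs explicit (the paper uses them silently in its two assertions about dimensions of components), quotes a general criterion that applies verbatim, and attends to the graded-to-local passage; the paper's version is shorter and avoids local algebra altogether, at the price of leaving the unmixedness justifications to the reader.
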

  \begin{proof}
    If the sequence $(g_1, \cdots, g_c)$ is not regular, then there exists $1 \leq j< c$ such that the sequence $(g_1, \cdots, g_j)$ is regular, while the sequence $(g_1, \cdots, g_j, g_{j+1})$ is not. 
    Therefore, every irreducible component of $V_+(g_1, \cdots, g_j)$ has dimension $n + c -j$, and there exists an irreducible component $Z_1$ of $V_+(g_1, \cdots, g_j, g_{j+1})$ with dimension $n +c-j$. Then every irreducible component of $Z_1\cap V_+(g_{j+2})\subset V_+(g_1, \cdots, g_j, g_{j+1}, g_{j+2})$ has dimension $\geq n+c-j-1$. 
    Thus, by induction, $X = V_+(g_1, \cdots, g_j, g_{j+1},\cdots, g_c)$ has dimension $\geq n + 1$. This leads to a contradiction, since $X$ is a set-theoretic complete intersection of codimension $c$.
  \end{proof}

  \begin{proposition}\label{prop41}
    The subvariety $Y\subset \mathbb{P}V_{\textbf{F}}$ is a set-theoretic complete intersection of codimension $c$ if and only if $b\geq c-1$. Furthermore, if $b = c$, then $M(\textbf{F})$ is quadratic.
  \end{proposition}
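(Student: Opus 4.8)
The plan is to read off both parts of the statement from a single dimension count, obtained by splitting $Y$ according to the value of the coordinate $w_0$. Writing $Y = \bigl(Y\cap\{w_0\neq 0\}\bigr)\cup\bigl(Y\cap\{w_0=0\}\bigr)$, I would analyze the two pieces separately and then combine them using the fact that $Y=V_+(f_1,\dots,f_c)\subset\mathbb{P}^{n+c}$ is cut out by $c$ equations, so every irreducible component of $Y$ has dimension $\geq n$.

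First I would treat the affine chart $\{w_0\neq 0\}$. Setting $w_0=1$, the equations $f_i=0$ become $w_{n+i}=Q_i(w_1,\dots,w_n)$, so $Y\cap\{w_0\neq 0\}$ is the graph of $(w_1,\dots,w_n)\mapsto (Q_1,\dots,Q_c)$. This is isomorphic to $\mathbb{A}^n$, hence irreducible of dimension $n$, and it is exactly the open orbit appearing in the definition of $M(\textbf{F})$; therefore its closure is $M(\textbf{F})$, contributing a single $n$-dimensional component to $Y$.

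Next I would compute the trace at infinity. On $\{w_0=0\}$ the equations reduce to $Q_i(w_1,\dots,w_n)=0$, so $Y\cap\{w_0=0\}=V_+(Q_1,\dots,Q_c)\subset\mathbb{P}^{N-1}$, where the $Q_i$ involve only $w_1,\dots,w_n$. The key observation is that its affine cone in $\mathbb{A}^N$ is the product $\widehat{\textbf{Bs}(\textbf{F})}\times\mathbb{A}^c$ of the affine cone over $\textbf{Bs}(\textbf{F})\subset\mathbb{P}W$ with the coordinate subspace spanned by $w_{n+1},\dots,w_N$. Hence $\operatorname{dim}\bigl(Y\cap\{w_0=0\}\bigr)=\operatorname{dim}\textbf{Bs}(\textbf{F})+c=(n-1-b)+c$, where I use the stated convention $\operatorname{dim}\textbf{Bs}(\textbf{F})=-1$, $b=n$ when $\textbf{Bs}(\textbf{F})=\emptyset$. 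Combining the two pieces, $\operatorname{dim}Y=\max\{n,\,n-1-b+c\}$. Since $Y$ is an intersection of $c$ hypersurfaces, it is a set-theoretic complete intersection of codimension $c$ exactly when $\operatorname{dim}Y=n$, i.e. when $n-1-b+c\leq n$, which is precisely $b\geq c-1$; this gives the equivalence.

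For the final clause, if $b=c$ then $\operatorname{dim}\bigl(Y\cap\{w_0=0\}\bigr)=n-1<n$, so no irreducible component of $Y$ (each of dimension $\geq n$) can lie in $\{w_0=0\}$. Thus $Y=\overline{Y\cap\{w_0\neq 0\}}=M(\textbf{F})$, so $M(\textbf{F})$ is quadratic. The step requiring the most care is the dimension computation of the trace at infinity: correctly recognizing the product (cone) structure to obtain $\operatorname{dim}\textbf{Bs}(\textbf{F})+c$, and treating the degenerate case $\textbf{Bs}(\textbf{F})=\emptyset$ consistently with the conventions, so that the boundary values $b=c-1$ (two $n$-dimensional components, a set-theoretic complete intersection but not quadratic) and $b=c$ (one component, quadratic) come out correctly.
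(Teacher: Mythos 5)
Your proof is correct and follows essentially the same route as the paper: both decompose $Y$ along the hyperplane $\{w_0=0\}$, identify $Y\cap\{w_0\neq 0\}\simeq\mathbb{C}^n$ with the open orbit whose closure is $M(\textbf{F})$, compute $\dim\bigl(Y\cap\{w_0=0\}\bigr)=n+c-b-1$ via the cone structure over $\textbf{Bs}(\textbf{F})$, and conclude by the dimension bound on components of an intersection of $c$ hypersurfaces. The only cosmetic difference is that the paper routes the equivalence ``set-theoretic complete intersection of codimension $c$ $\Leftrightarrow$ $\dim Y=n$'' through Lemma \ref{lemma41}, whereas you read it directly from the definition, which is equally valid here.
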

  \begin{proof}
    Let $H$ be the hyperplane of $\mathbb{P}V_{\textbf{F}}$ defined by $w_0 = 0$, and let $U$ be its complement.
    Note that $H\cap Y$ is a cone over $\textbf{Bs}(\textbf{F})\subset \mathbb{P}W \subset \mathbb{P}V_{\textbf{F}}$, so $\operatorname{dim}(H\cap Y) = n + c -b -1$ and $U \cap Y \simeq \mathbb{C}^n$.
    By Lemma \ref{lemma41}, the finite sequence $(f_1, \cdots, f_c)$ is regular if and only if the subvariety $Y\subset \mathbb{P}V_{\textbf{F}}$ is a set-theoretic complete intersection of codimension $c$, which is equivalent to $\operatorname{dim}(Y) = n$.
    Thus, we have $\operatorname{dim}(H\cap Y) = n + c -b -1 \leq n$, which is equivalent to $b \geq c-1$. If $b = c$, then $\operatorname{dim}(H\cap Y) = n-1$, $H \cap Y$ is a divisor of $Y$. As $U \cap Y \simeq \mathbb{C}^n$ is irreducible, $Y$ is irreducible.
    Therefore, $Y = M(\textbf{F})$.
  \end{proof}

  \begin{cor}
    If $\textbf{Bs}(\textbf{F})$ is a set-theoretic complete intersection of codimension $c$, then $M(\textbf{F})$ is a set-theoretic complete intersection of codimension $c$.
  \end{cor}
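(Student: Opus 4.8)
The plan is to read the hypothesis as the single numerical condition $b = c$ and then invoke Proposition \ref{prop41} directly. First I would unwind the definition: since $\textbf{Bs}(\textbf{F}) = V_+(Q_1, \cdots, Q_c) \subset \mathbb{P}W$ is already cut out by the $c$ forms $Q_1, \cdots, Q_c$, the only content of the statement that $\textbf{Bs}(\textbf{F})$ is a set-theoretic complete intersection of codimension $c$ is that its codimension is exactly $c$, i.e. $b = \operatorname{codim}_{\mathbb{P}W}(\textbf{Bs}(\textbf{F})) = c$. The edge case $c = n$ with $\textbf{Bs}(\textbf{F}) = \emptyset$ is covered by the convention fixed just before the definition of \emph{quadratic}, under which $\operatorname{codim}_{\mathbb{P}^{n-1}}(\textbf{Bs}(\textbf{F})) = n = c$, so it too falls under $b = c$.

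With $b = c$ in hand, I would apply the ``furthermore'' clause of Proposition \ref{prop41}: when $b = c$ the variety $M(\textbf{F})$ is quadratic, which by definition means $M(\textbf{F}) = Y = V_+(f_1, \cdots, f_c)$. At the same time, since $b = c \geq c-1$, the first part of Proposition \ref{prop41} gives that $Y \subset \mathbb{P}V_{\textbf{F}}$ is a set-theoretic complete intersection of codimension $c$. Combining the two, $M(\textbf{F}) = Y$ is set-theoretically cut out by the $c$ hyperquadrics $f_1, \cdots, f_c$ and has codimension $c$, which is exactly the assertion.

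There is no serious obstacle here, as the corollary is an immediate consequence of Proposition \ref{prop41}. The one point requiring care is the bookkeeping that translates ``set-theoretic complete intersection of codimension $c$'' for the base locus into the equality $b = c$ rather than the weaker inequality $b \geq c-1$; it is precisely the stronger hypothesis $b = c$ that simultaneously forces $Y$ to be a genuine set-theoretic complete intersection and forces $M(\textbf{F})$ to coincide with $Y$. Were one to assume only $b \geq c-1$, Proposition \ref{prop41} would guarantee that $Y$ is a set-theoretic complete intersection but would not a priori identify it with $M(\textbf{F})$, so the conclusion would not follow in that generality.
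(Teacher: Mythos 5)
Your proof is correct and takes essentially the same route as the paper: translate the hypothesis into the equality $b = c$, then use Proposition \ref{prop41} to conclude both that $M(\textbf{F}) = Y$ (the ``furthermore'' clause) and that $Y$ is a set-theoretic complete intersection of codimension $c$. The paper's own proof is exactly this argument compressed into one line, and your additional care about the edge case $c = n$ and about $b = c$ versus $b \geq c-1$ matches the conventions the paper sets up before Proposition \ref{prop41}.
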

  \begin{proof}
    By Proposition \ref{prop41}, $Y = M(\textbf{F})$, and $Y$ is a set-theoretic complete intersection of dimension $n$.
  \end{proof}

  \begin{lemma}\label{lemstab}
    The subvariety $Y$ is stable under the action of $W$ on $\mathbb{P}V_{\textbf{F}}$ defined in Theorem \ref{thmfh20}. To be more precise, 
    $$
    g_v\cdot w = [w_0:b_1:\cdots:b_n:b_{n+1}:\cdots:b_{N}],
    $$ for any $v = (v_1, \cdots, v_n) \in W$ and $w = [w_0:w_1:\cdots:w_n:w_{n+1}:\cdots:w_{N}]\in \mathbb{P}V_{\textbf{F}}$,
    where
    \begin{align*}
      b_i & = w_i + w_0 v_i, \; 1\leq i \leq n; \\
      b_{n+j} &= w_{n+j} + 2v\cdot A_j \cdot (w_1, \cdots, w_n)^{t} + w_0Q_j(v), \; 1\leq j \leq c,
    \end{align*}
    and $A_j$ is the symmetric matrix corresponding to the quadric polynomial $Q_j$.
  \end{lemma}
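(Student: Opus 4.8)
The plan is to first pin down the explicit shape of the Theorem~\ref{thmfh20} action by computing it on the open orbit, and then to verify stability of $Y$ by a direct substitution into the defining quadrics $f_1,\dots,f_c$. Recall from Definition~\ref{rank} that for rank $2$ the embedding is $\phi_{\textbf{F}}([t:u]) = [\,t^2 : tu_1 : \cdots : tu_n : Q_1(u) : \cdots : Q_c(u)\,]$, and that by Theorem~\ref{thmfh20}(1) the $W$-action identifies the open orbit of $o$ with $W$ so that $v\in W$ acts on $\mathbb{P}(\mathbb{C}\oplus W)$ by the translation $[t:u]\mapsto [t : u + tv]$. Since this action extends to a \emph{linear} action on $V_{\textbf{F}}$, it suffices to evaluate it on the affine cone over the open orbit; because $M(\textbf{F})$ is nondegenerate this cone spans $V_{\textbf{F}}$, so the resulting linear map is uniquely determined on all of $\mathbb{P}V_{\textbf{F}}$.

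Carrying out this computation, I would apply $\phi_{\textbf{F}}$ to $[t:u+tv]$ and expand
\[
  Q_j(u+tv) = Q_j(u) + 2t\,v\cdot A_j\cdot u^{t} + t^2 Q_j(v),
\]
so that $\phi_{\textbf{F}}([t:u+tv]) = [\,t^2 : t(u+tv) : Q_j(u)+2t\,v\cdot A_j\cdot u^t + t^2 Q_j(v)\,]$. Substituting the coordinates of the original point, namely $w_0 = t^2$, $w_i = tu_i$, and $w_{n+j} = Q_j(u)$, the entries become exactly $b_i = w_i + w_0 v_i$ and $b_{n+j} = w_{n+j} + 2\,v\cdot A_j\cdot(w_1,\dots,w_n)^t + w_0 Q_j(v)$, which is the claimed formula. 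As a consistency check one verifies that $v\mapsto g_v$ is a homomorphism, $g_v\circ g_{v'} = g_{v+v'}$, using the identity $Q_j(v+v') = Q_j(v) + 2\,v\cdot A_j\cdot (v')^t + Q_j(v')$; this confirms that the formula defines a genuine $W$-action.

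Finally, stability of $Y$ follows by substituting $g_v\cdot w$ into each quadric. Both $b_0 b_{n+j}$ and $Q_j(b_1,\dots,b_n)$ produce the same cross terms $2w_0\,v\cdot A_j\cdot(w_1,\dots,w_n)^t$ and $w_0^2 Q_j(v)$, which cancel in the difference, leaving
\[
  f_j(g_v\cdot w) = b_0 b_{n+j} - Q_j(b_1,\dots,b_n) = w_0 w_{n+j} - Q_j(w_1,\dots,w_n) = f_j(w).
\]
Thus each defining quadric is genuinely invariant, not merely invariant modulo the ideal, so $g_v$ preserves $Y = V_+(f_1,\dots,f_c)$. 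The only real subtlety is the first step, namely justifying that the formula obtained on the dense orbit is valid on all of $\mathbb{P}V_{\textbf{F}}$; this rests on the linearity of the action together with the nondegeneracy of $M(\textbf{F})$, and once the formula is in hand the invariance computation is short and exact.
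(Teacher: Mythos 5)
Your proposal is correct, and its decisive step coincides with the paper's proof: the paper's argument is exactly the substitution of $(w_0,b_1,\dots,b_N)$ into each $f_j$, expanding $Q_j(b_1,\dots,b_n)$ so that the cross terms $2w_0\,v\cdot A_j\cdot(w_1,\dots,w_n)^t$ and $w_0^2Q_j(v)$ cancel (the paper phrases this as $f_j(g_v\cdot w)=0$ for $w\in Y$, using $w_0w_{n+j}=Q_j(w_1,\dots,w_n)$ midway, while you prove the slightly stronger exact identity $f_j(g_v\cdot w)=f_j(w)$ for all $w$; these are the same computation). Where you genuinely go beyond the paper is the first part: the paper's proof takes the coordinate formula for $g_v$ as given (implicitly from the construction in \cite{FH20}) and never derives it, whereas you obtain it by pushing the translation $[t:u]\mapsto[t:u+tv]$ through $\phi_{\textbf{F}}$, expanding $Q_j(u+tv)$, and then extending from the open orbit by linearity and nondegeneracy; you also check $g_v\circ g_{v'}=g_{v+v'}$. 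This addition is worthwhile, since the formula is the actual content of the "to be more precise" clause. One point to state carefully in your extension step: knowing that the linear map $\tilde g_v$ from Theorem \ref{thmfh20} agrees \emph{projectively} with your formula $L_v$ on the cone over the open orbit gives, a priori, only $\tilde g_v(p)=\lambda(p)L_v(p)$ with a scalar depending on $p$; to conclude $\tilde g_v$ is proportional to $L_v$ one should observe that every point of the cone is an eigenvector of $L_v^{-1}\circ\tilde g_v$, and since that cone is \emph{irreducible} and spans $V_{\textbf{F}}$, it lies in a single eigenspace, forcing $L_v^{-1}\circ\tilde g_v=\lambda\,\mathrm{id}$. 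Spanning alone is not sufficient (distinct diagonal matrices agree projectively on a basis), so irreducibility must be invoked; with that standard remark inserted, your argument is complete.
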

  \begin{proof}
    For any $w = [w_0:w_1:\cdots:w_n:w_{n+1}:\cdots:w_{N}]\in Y$, $\overline{w} := (w_1, \cdots, w_n)$, we have
    \begin{align*}
      & w_0b_{n+j} - Q_j(b_1, \cdots, b_n) \\
      = & w_0 w_{n+j} + 2w_0(v_1, \cdots, v_n)\cdot A_j \cdot \overline{w}^{t}+ Q_j(w_0v_1, \cdots, w_0v_n)- Q_j(b_1,\cdots, b_n)\\
      = & Q_j(w_1, \cdots, w_n) + (w_0v_1, \cdots, w_0v_n)\cdot A_j \cdot \overline{w}^{t}+ v \cdot A_j \cdot (w_0v_1, \cdots, w_0v_n)^{t} \\
      & +Q_j(w_0v_1, \cdots, w_0v_n)- Q_j(b_1,\cdots, b_n)\\
      = & (w_1+w_0v_1, \cdots, w_n + w_0v_n) \cdot A_j \cdot (w_1+w_0v_1, \cdots, w_n + w_0v_n)^{t} - Q_j(b_1,\cdots, b_n) \\
      = & 0,
    \end{align*}
    for $1\leq j \leq c$.
    This completes the proof.
  \end{proof}

  \begin{proposition}\label{bp}
    For a general smooth point $x\in M(\textbf{F})$, we have
    \begin{enumerate}
      \item $Bs(\textbf{F}) = \mathcal{L}_x(Y)$;
      \item $\mathcal{L}_x(M(\textbf{F})) = \mathcal{L}_x(Y)$;
    \end{enumerate}
  \end{proposition}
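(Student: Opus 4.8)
The plan is to exploit the quasi-homogeneity of $M(\textbf{F})$ in order to reduce both assertions to the single base point $o = [1:0:\cdots:0]$, where everything can be read off directly from the defining quadrics $f_1, \ldots, f_c$. By Theorem \ref{thmfh20} the vector group $W$ acts on $\mathbb{P}V_{\textbf{F}}$ preserving $M(\textbf{F})$, with $o$ lying in the open orbit (which is smooth, being biregular to $W$), and by Lemma \ref{lemstab} the same action preserves $Y$. Since each $g_v$ is a projective linear automorphism of $\mathbb{P}V_{\textbf{F}}$, it carries lines to lines and hence induces bijections $\mathcal{L}_o(M(\textbf{F})) \xrightarrow{\sim} \mathcal{L}_x(M(\textbf{F}))$ and $\mathcal{L}_o(Y) \xrightarrow{\sim} \mathcal{L}_x(Y)$ for $x = g_v\cdot o$, compatible with the derivative $dg_v \colon \mathbb{P}T_oM(\textbf{F}) \xrightarrow{\sim} \mathbb{P}T_xM(\textbf{F})$. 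As a general smooth point lies in the open orbit, it suffices to establish both equalities at $o$.

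At $o$ the computation is direct, and this is the step I would carry out first. In the chart $w_0 = 1$ the variety $M(\textbf{F})$ is the graph $x \mapsto (x, Q_1(x), \ldots, Q_c(x))$, so $T_oM(\textbf{F})$ is the coordinate subspace spanned by $w_1, \ldots, w_n$ and $\mathbb{P}T_oM(\textbf{F})$ is canonically $\mathbb{P}W$. A line through $o$ with direction $[v_1:\cdots:v_N]$ consists of the points $[\lambda : \mu v_1 : \cdots : \mu v_N]$, and substituting into $f_j$ yields $\lambda\mu\, v_{n+j} - \mu^2 Q_j(v_1, \ldots, v_n)$; this vanishes identically on the line exactly when $v_{n+j} = 0$ and $Q_j(v_1, \ldots, v_n) = 0$ for every $j$. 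Hence $\mathcal{L}_o(Y)$ is precisely the set of directions $[v_1:\cdots:v_n:0:\cdots:0]$ with $Q_1(v) = \cdots = Q_c(v) = 0$, which is $\textbf{Bs}(\textbf{F}) \subset \mathbb{P}W$. Moreover any such line already lies in $M(\textbf{F})$, since when $Q_j(v) = 0$ one has $\phi_{\textbf{F}}([1:\mu v]) = [1 : \mu v : \mu^2 Q(v)] = [1:\mu v : 0]$. This gives $\mathcal{L}_o(M(\textbf{F})) = \mathcal{L}_o(Y) = \textbf{Bs}(\textbf{F})$, i.e.\ both statements at $o$.

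It then remains to transport these identities to a general point $x = g_v \cdot o$. Statement (2) is immediate: the bijections above give $\mathcal{L}_x(M(\textbf{F})) = g_v \cdot \mathcal{L}_o(M(\textbf{F})) = g_v \cdot \mathcal{L}_o(Y) = \mathcal{L}_x(Y)$ inside the common space $\mathbb{P}T_xM(\textbf{F})$ (the inclusion $\mathcal{L}_x(Y)\subset \mathbb{P}T_xM(\textbf{F})$ holding because, as seen at $o$, every line of $Y$ through the point is tangent to $M(\textbf{F})$). For statement (1) I would make the identification $\mathbb{P}T_xM(\textbf{F}) \cong \mathbb{P}W$ induced by $dg_v$ explicit: differentiating the formulas of Lemma \ref{lemstab} at $o$ sends a tangent direction $[u] \in \mathbb{P}W$ to $[u_1 : \cdots : u_n : 2vA_1u^t : \cdots : 2vA_cu^t]$, and under this identification $\mathcal{L}_x(Y) = dg_v(\textbf{Bs}(\textbf{F}))$ corresponds again to the locus $\{Q_1(u) = \cdots = Q_c(u) = 0\} = \textbf{Bs}(\textbf{F})$. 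The one point requiring care — and the main, if modest, obstacle — is exactly this bookkeeping: $\textbf{Bs}(\textbf{F})$ is a fixed subvariety of $\mathbb{P}W$ whereas $\mathcal{L}_x(Y)$ lives in the moving tangent space $\mathbb{P}T_xM(\textbf{F})$, so one must verify that the canonical trivialization supplied by $dg_v$ matches the two without introducing an extraneous automorphism of $\mathbb{P}W$. Granting this, both equalities hold throughout the open orbit, hence at a general smooth point of $M(\textbf{F})$.
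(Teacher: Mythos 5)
Your proof is correct, and its skeleton coincides with Steps 1 and 2 of the paper's own proof: both reduce to the distinguished point $o$ via the $W$-action (Lemma \ref{lemstab} and Theorem \ref{thmfh20}), both compute $\mathcal{L}_o(Y)=\textbf{Bs}(\textbf{F})$ by substituting a line $[\lambda:\mu v_1:\cdots:\mu v_N]$ into the quadrics $f_j$, and both resolve the bookkeeping issue you flag at the end in the same way, by letting the group action itself transport the identification $\mathcal{L}_o(Y)=\textbf{Bs}(\textbf{F})$ to a general point of the open orbit. The genuine divergence is in assertion (2), which is the only delicate point, since $Y$ may be reducible with $M(\textbf{F})\subsetneq Y$ (this really happens, e.g.\ for $F^2=\left\langle x_1^2,x_2^2,x_1x_2\right\rangle$, the example following Theorem \ref{thm41}). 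The paper's Step 3 argues indirectly: a line $l\subset Y$ through $x$ not contained in $M(\textbf{F})$ would lie in the union $Y'$ of the remaining components; translating, $g_u\cdot x\in g_u\cdot l\subset Y'$ for all $u\in W$, so the dense orbit of $x$, hence all of $M(\textbf{F})$, would be contained in $Y'$, a contradiction. You instead prove the needed inclusion $\mathcal{L}_o(Y)\subset\mathcal{L}_o(M(\textbf{F}))$ directly: if $Q_j(v)=0$ for all $j$, then $\phi_{\textbf{F}}([1:\mu v])=[1:\mu v:0:\cdots:0]$, so the affine part of the line $l_v$ lies in the image of $\phi_{\textbf{F}}$ and therefore $l_v$ lies in its closure $M(\textbf{F})$. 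Your route is shorter and constructive, and it sidesteps all component-theoretic care (you never need to know that the $W$-action preserves $Y'$); the paper's route, in exchange, uses nothing beyond density of the orbit and the fact that the action preserves both $M(\textbf{F})$ and $Y$, so it would survive in situations where the lines of $Y$ through $o$ could not be so explicitly realized inside the parametrization. Both arguments are complete.
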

  \begin{proof}
    We divide the proof into three steps:

    Step 1, for any $y \in O_x$, we have $\mathcal{L}_y(Y) = \mathcal{L}_x(Y)$, where $O_x$ is the orbit of $x$ in $M(\textbf{F})$ under the action of $W$ defined in Lemma
    \ref{lemstab}:

    We only need to show that $g_u\cdot l$ is also a line in $Y$ for all $u \in W$ and line $l$ through $v, w \in Y$,
    \[
        l =\overline{vw}= \{[pv_0+qw_0: \cdots: pv_N + qw_N]\mid [p:q]\in \mathbb{P}^1\}\subset Y.
    \]
    For $u = (u_1, \cdots, u_n) \in W$, we have
    \[
      g_u\cdot l = \{[pv_0+qw_0: z_1: \cdots: z_N]\mid [p:q]\in \mathbb{P}^1\},
    \]
    where
    \begin{align*}
      z_i = & p(v_i+v_0u_i) + q(w_i+w_0u_i) = p(g_u\cdot v)_i + q(g_u\cdot w)_i, \; \forall 1\leq i\leq n;\\
      z_{n+j} = & (pv_{n+j}+qw_{n+j}) + 2 (u_1, \cdots, u_n)A_j(pv_1+qw_1, \cdots, pv_n+qw_n)^t \\
      &+ (pv_0+qw_0)Q_j(u) \\
      = &p(v_{n+j} + 2 (u_1, \cdots, u_n)A_j(v_0, \cdots, v_n)^t + v_0Q_j(u)) +\\
      & q(w_{n+j} + 2 (u_1, \cdots, u_n)A_j(w_0, \cdots, w_n)^t+ w_0Q_j(u))\\
      = &p(g_u\cdot v)_{n+j} + q(g_u\cdot w)_{n+j}.
    \end{align*}
    Therefore, $g_u\cdot l$ is a line in $Y$ through $g_u\cdot v$ and $g_u\cdot w$.

    Step 2, we can assume that $x=[1: 0: \cdots:0]\in M(\textbf{F})$, $v= [v_0: v_1: \cdots: v_N] \in Y$. If the line $l = \overline{xv} \in \mathcal{L}_x(Y)$, we have
    \[
      (p+qv_0)qv_{n+j} = q^2Q_j(v_1, \cdots, v_n), \; [p:q]\in \mathbb{P}^1\; \forall 1\leq j \leq c;
    \]
    therefore, $v_{n+j} = 0$ and $[v_1: \cdots: v_n]\in \textbf{Bs}(\textbf{F})$. Then the line $l$ is of the form
    \[
      l = \{[p: qv_1: \cdots: qv_n: 0:\cdots:0]\mid [p:q]\in \mathbb{P}^1 \}.
    \]
    Therefore, the rational map $\tau_x$ defined in \cite{H01}
    \[
      \tau_x : \mathcal{L}_x(Y) \to \textbf{Bs}(\textbf{F})\subset \mathbb{P}T_xY, \; [l]\mapsto \text{tangent direction at } x,
    \]
    is a regular morphism.
    We have the inverse morphism
    \[
      \phi: \textbf{Bs}(\textbf{F}) \to \mathcal{L}_x(Y), v\mapsto [l_v],
    \]
    where $l_v = \{[p:qv_1:\cdots:qv_n:0:\cdots :0]\mid [p:q]\in \mathbb{P}^1 \} \subset Y$.
    Hence, we have $Bs(\textbf{F}) = \mathcal{L}_x(Y)$.

    Step 3, if $Y = M(\textbf{F})$, then $\mathcal{L}_x(M(\textbf{F})) = \mathcal{L}_x(Y)$. Assume now $M(\textbf{F}) \subsetneq Y$, write $Y = M(\textbf{F}) \cup Y^{\prime}$, $M(\textbf{F})\nsubseteq Y^{\prime}$. If there exists $[l]\in \mathcal{L}_x(Y)$ such that $[l]\notin \mathcal{L}_x(M(\textbf{F}))$, $l\cap M(\textbf{F})$ is a finite set and $x\in l \subset Y^{\prime}$. For any $u\in W$, from Lemma \ref{lemstab} and Theorem \ref{thmfh20} we have
    \[
      g_u\cdot x \in g_u\cdot l \subset Y^{\prime}.
    \]
    According to Theorem \ref{thmfh20}, the orbit of $x$ is an open dense subset of $M(\textbf{F})$, hence $M(\textbf{F})\subset Y^{\prime}$, this leads to a contradiction. Therefore, we have $\mathcal{L}_x(M(\textbf{F})) = \mathcal{L}_x(Y)$.
  \end{proof}

  \begin{thm}\label{thm41}
    Let $M(\textbf{F})\subset \mathbb{P}V_{\textbf{F}}$ be an Euler-symmetric projective variety of dimension $n$ corresponding to the symbol system $\textbf{F}$ of rank $2$ with $F^2 = \left\langle Q_1, \cdots, Q_c\right\rangle$ of dimension $c$. Let $\textbf{Bs}(\textbf{F})\subset \mathbb{P}W$ be the base loci of the symbol system $\textbf{F}$, and $b = \operatorname{codim}_{\mathbb{P}W}(\textbf{Bs}(\textbf{F}))$. For a general smooth point $x\in M(\textbf{F})$, the following are equivalent:
    \begin{enumerate}
      \item $M(\textbf{F})$ is quadratic;
      \item $\textbf{Bs}(\textbf{F})\subset \mathbb{P}W$ is a set-theoretic complete intersection of codimension $c$;
      \item $\mathcal{L}_x(M(\textbf{F}))$ is a set-theoretic complete intersection of codimension $c$;
      \item $b = c$;
    \end{enumerate}
  \end{thm}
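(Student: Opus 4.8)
The plan is to establish the four equivalences by combining the previously developed machinery---principally Proposition \ref{prop41} and Proposition \ref{bp}---with a short dimension-counting argument, and then to close a cycle of implications. First I would make the crucial observation that the conditions $(1)$ through $(4)$ are linked by the single numerical invariant $b = \operatorname{codim}_{\mathbb{P}W}(\textbf{Bs}(\textbf{F}))$, so the natural strategy is to prove that each condition is equivalent to the statement $b = c$, which is condition $(4)$. Since $\textbf{Bs}(\textbf{F}) = V_+(Q_1, \ldots, Q_c) \subset \mathbb{P}W = \mathbb{P}^{n-1}$ is cut out by exactly $c$ equations, we always have $b \leq c$, so the content of $(4)$ is the reverse inequality. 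The equivalence $(2) \Leftrightarrow (4)$ is essentially a restatement: $\textbf{Bs}(\textbf{F})$ is a set-theoretic complete intersection of codimension $c$ precisely when it has the expected codimension $c$, which is the condition $b = c$ (with the convention recorded before the Definition, that $\textbf{Bs}(\textbf{F}) = \emptyset$ is assigned codimension $n$ when $c = n$).

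Next I would establish $(1) \Leftrightarrow (4)$ directly from Proposition \ref{prop41}. That proposition already gives the forward direction: if $b = c$, then $M(\textbf{F})$ is quadratic, i.e. $M(\textbf{F}) = Y = V_+(f_1, \ldots, f_c)$. For the converse I would argue by examining $\dim(H \cap Y)$ exactly as in the proof of Proposition \ref{prop41}. There the hyperplane section $H \cap Y$ at infinity is shown to be a cone over $\textbf{Bs}(\textbf{F})$, so $\dim(H \cap Y) = n + c - b - 1$, while the affine part $U \cap Y \simeq \mathbb{C}^n$ is irreducible of dimension $n$. If $M(\textbf{F})$ is quadratic then $Y = M(\textbf{F})$ is irreducible of dimension $n$, which forces $n + c - b - 1 \leq n - 1$, i.e. $b \geq c$, and hence $b = c$. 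This completes $(1) \Leftrightarrow (4)$.

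The remaining link is $(3) \Leftrightarrow (2)$, and here I would invoke Proposition \ref{bp}, which identifies $\mathcal{L}_x(M(\textbf{F})) = \mathcal{L}_x(Y) = \textbf{Bs}(\textbf{F})$ for a general smooth point $x$, under the natural embedding of the variety of lines into $\mathbb{P}T_xM(\textbf{F})$. Because this is an isomorphism of the embedded projective schemes (via the morphisms $\tau_x$ and $\phi$ constructed there), the property of being a set-theoretic complete intersection of codimension $c$ transfers verbatim between $\mathcal{L}_x(M(\textbf{F}))$ and $\textbf{Bs}(\textbf{F})$; both sit in projective spaces of the same dimension $n - 1$ and carry the same ideal up to the isomorphism, so $(3)$ and $(2)$ say literally the same thing. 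The main obstacle, and the step I expect to require the most care, is the converse half of $(1) \Leftrightarrow (4)$: I must rule out the possibility that $Y$ is reducible with $M(\textbf{F})$ a proper component while $Y$ itself still has dimension $n$. The dimension computation $\dim(H \cap Y) = n + c - b - 1$ handles this cleanly, but it relies on the cone description of $H \cap Y$ and on the irreducibility of the open affine cell, so I would state those two facts carefully and cite the relevant parts of Proposition \ref{prop41} rather than recomputing them. With all four conditions shown equivalent to $b = c$, the proof is complete.
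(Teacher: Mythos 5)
Your proposal is correct and follows essentially the same route as the paper: Proposition \ref{bp} gives $(2)\Leftrightarrow(3)$, Proposition \ref{prop41} gives $(4)\Rightarrow(1)$, and the dimension count $\dim(H\cap Y)=n+c-b-1$ for the cone at infinity closes the remaining implication. The only difference is organizational---you hub all conditions through $(4)$, while the paper runs the cycle $(1)\Rightarrow(2)\Rightarrow(4)\Rightarrow(1)$---but the underlying computations are identical.
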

  \begin{proof}
    Proposition \ref{bp} implies that $(2)$ and $(3)$ are equivalent. $(2)$ implies $(4)$ is obvious. $(4)$ implies $(1)$ by Proposition \ref{prop41}.

    If $M(\textbf{F})$ is quadratic, then $\operatorname{dim}(Y) = n$ and irreducible. Let $H$ be the hyperplane of $\mathbb{P}V_{\textbf{F}}$ defined by $w_0=0$, and let $U$ be its complement. Since $U \cap Y \simeq \mathbb{C}^n$, we have $\operatorname{dim}(H\cap Y) = n-1 = n + c -b -1$. Therefore, $\operatorname{dim}(\textbf{Bs}(\textbf{F})) = n -1 -c$. Since $\textbf{Bs}(\textbf{F}) = V_+(Q_1, \cdots, Q_c) \subset \mathbb{P}^{n-1}$, $\textbf{Bs}(\textbf{F})$ is a set-theoretic complete intersection of codimension $c$.
  \end{proof}

  \begin{exam}
    If $F^2 = \left\langle x_1^2, x_2^2, x_1x_2 \right\rangle \subset \mathbb{C}[x_1, x_2, x_3, x_4]$, then $M(\textbf{F}) \subsetneq Y$ and $Y$ is a set-theoretic complete intersection of codimension $c = 3$, $\textbf{Bs}(\textbf{F}) = \mathbb{P}^1\subset \mathbb{P}W = \mathbb{P}^3$, $b = c-1$.
  \end{exam}

  \begin{thm}\label{thm42}
     Let $M(\textbf{F})\subset \mathbb{P}V_{\textbf{F}}$ be an Euler-symmetric projective variety of dimension $n$ corresponding to the symbol system $\textbf{F}$ of rank $2$ with $F^2 = \left\langle Q_1, \cdots, Q_c\right\rangle$ of dimension $c$. Let $\textbf{Bs}(\textbf{F})\subset \mathbb{P}W$ be the base loci of the symbol system $\textbf{F}$. The following statements are equivalent: 
    \begin{enumerate}
      \item $M(\textbf{F})\subset \mathbb{P}V_{\textbf{F}}$ is a complete intersection of codimension $c$;
      \item $\textbf{Bs}(F) \subset \mathbb{P}W$ is a set-theoretic complete intersection of codimension $c$;
      \item The finite sequence $(Q_1, \cdots, Q_c)$ is a regular sequence in $\mathbb{C}[x_1, \cdots, x_n]$.
    \end{enumerate}
  \end{thm}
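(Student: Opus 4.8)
The plan is to build on Theorem \ref{thm41}, which already shows that condition (2) is equivalent to the numerical equality $b = c$, and to establish the two new implications $(1) \Leftrightarrow (2)$ and $(2) \Leftrightarrow (3)$. For $(2) \Leftrightarrow (3)$ I would note that both sides are equivalent to $b = c$. The equivalence of (2) with $b = c$ is Theorem \ref{thm41}, while the equivalence of (3) with $b = c$ expresses the standard principle that a sequence of homogeneous polynomials in $\mathbb{C}[x_1, \ldots, x_n]$ is regular exactly when its projective zero locus $\textbf{Bs}(\textbf{F}) = V_+(Q_1, \ldots, Q_c)$ has codimension equal to the length $c$ of the sequence: one direction is Lemma \ref{lemma41} applied with $\mathbb{P}W = \mathbb{P}^{n-1}$ in place of $\mathbb{P}^{n+c}$ (its proof is insensitive to the ambient projective space), and the converse is Krull's height bound together with the fact that a regular sequence of length $c$ generates an ideal of height $c$ in the Cohen-Macaulay ring $\mathbb{C}[x_1, \ldots, x_n]$.

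For $(1) \Rightarrow (2)$ I would invoke Proposition \ref{prop31}: an Euler-symmetric complete intersection is cut out by the hyperquadrics $f_1, \ldots, f_c$, so $\mathcal{I} = (f_1, \ldots, f_c)$. As $\mathcal{I}$ is the vanishing ideal of the irreducible variety $M(\textbf{F})$, it is radical, whence $(f_1, \ldots, f_c) = \operatorname{rad}((f_1, \ldots, f_c)) = \mathcal{J}$; thus $M(\textbf{F})$ is quadratic and Theorem \ref{thm41} delivers (2).

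The crux is $(2) \Rightarrow (1)$, where the difficulty is that a set-theoretic complete intersection need not be a scheme-theoretic one, so knowing $M(\textbf{F}) = Y$ as sets does not yet give $\mathcal{I} = (f_1, \ldots, f_c)$. Assuming $b = c$, Theorem \ref{thm41} and Proposition \ref{prop41} give that $M(\textbf{F}) = Y$ is irreducible of dimension $n$, so the ideal $(f_1, \ldots, f_c)$ has height $c$ in the polynomial ring $\mathbb{C}[w_0, \ldots, w_N]$. Since this ring is Cohen-Macaulay and the number of generators equals the height, $f_1, \ldots, f_c$ is a regular sequence; hence the quotient is Cohen-Macaulay and has no embedded primes. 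On the chart $w_0 \neq 0$ the relations $f_j$ solve for $w_{n+j} = Q_j(w_1, \ldots, w_n)$, exhibiting $U \cap Y \cong \mathbb{C}^n$ as in Proposition \ref{prop41}, so $V_+(f_1, \ldots, f_c)$ is reduced at its generic point. Generic reducedness together with the absence of embedded primes is precisely Serre's criterion $R_0 + S_1$ for reducedness, forcing $(f_1, \ldots, f_c)$ to be radical; therefore $\mathcal{I} = \operatorname{rad}((f_1, \ldots, f_c)) = (f_1, \ldots, f_c)$ and $M(\textbf{F})$ is a complete intersection of codimension $c$. I expect this radicality step, namely upgrading the set-theoretic complete intersection to an equality of ideals, to be the main obstacle, and I would handle it through the Cohen-Macaulayness of complete intersections and Serre's reducedness criterion.
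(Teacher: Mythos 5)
Your proposal is correct, but for the crucial implication $(2)\Rightarrow(1)$ it takes a genuinely different route from the paper. The paper argues by contradiction and by hand: assuming there is a homogeneous $g\in\mathcal{I}\setminus(f_1,\cdots,f_c)$, it uses the parametrization of $M(\textbf{F})$ to normalize $g$ (eliminating the $w_0$-dependence, passing to bihomogeneous components in the variables $w_1,\cdots,w_n$ and $w_{n+1},\cdots,w_N$, and removing factors $w_{n+j}$), and extracts a polynomial identity $\sum_{t=1}^{l} Q_tF_t(x_1,\cdots,x_n,Q_t,\cdots,Q_c)=0$ that exhibits some $Q_l$ as a zero-divisor modulo $(Q_1,\cdots,Q_{l-1})$, contradicting the regularity of $(Q_1,\cdots,Q_c)$ supplied by Lemma \ref{lemma41}. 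You instead transfer everything to the ambient ring: since $V_+(f_1,\cdots,f_c)=Y=M(\textbf{F})$ is irreducible of dimension $n$, the ideal $(f_1,\cdots,f_c)$ has height $c$, so in the Cohen--Macaulay polynomial ring the $f_i$ form a regular sequence; the quotient is then Cohen--Macaulay, hence $S_1$, and reducedness at the generic point, read off from the chart $U\cap Y\simeq\mathbb{C}^n$, gives $R_0$, so Serre's criterion makes $(f_1,\cdots,f_c)$ radical and therefore equal to $\mathcal{I}$. Your argument is shorter and more conceptual, and additionally shows that the quadric ideal itself is prime, but it leans on nontrivial standard machinery (unmixedness of Cohen--Macaulay quotients, Serre's criterion); the paper's computation is elementary and self-contained, using only polynomial identities. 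Your treatment of $(1)\Rightarrow(2)$, via Proposition \ref{prop31} and Theorem \ref{thm41}, coincides with the paper's, and your handling of $(2)\Leftrightarrow(3)$ is in fact more careful than the paper's one-line citation of Lemma \ref{lemma41}, since you also supply the converse direction (a regular sequence of length $c$ cuts out codimension $c$) via the height bound in the Cohen--Macaulay ring $\mathbb{C}[x_1,\cdots,x_n]$.
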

  \begin{proof}
    By Lemma \ref{lemma41}, $(2)$ is equivalent to $(3)$. Now we prove $(1)$ is equivalent to $(2)$.
    \begin{enumerate}[(i)]
      \item Suppose $M(\textbf{F})$ is a complete intersection. Let $\mathcal{I} = I(M(\textbf{F})) = (g_1, \cdots, g_c)$. Since $f_i \in \mathcal{I}$ and $M(\textbf{F})$ nondegenerate, we have $\operatorname{deg}(g_i)\geq 2$ and $\left\langle f_1, \cdots, f_c \right\rangle \subset \left\langle g_1, \cdots, g_c \right\rangle$. Therefore, $M(\textbf{F}) = Y$. Then $\textbf{Bs}(\textbf{F})$ is a set-theoretic complete intersection of codimension $c$ in $\mathbb{P}W$ by Theorem \ref{thm41}.

      \item Suppose $\textbf{Bs}(\textbf{F})$ is a set-theoretic complete intersection of codimension $c$, then $Y = M(\textbf{F})$ by Theorem \ref{thm41}. If $M(\textbf{F})$ is not a complete intersection, then there is a nonzero homogeneous polynomial $g(w_0, \cdots, w_N)\in \mathcal{I}\backslash (f_1, \cdots, f_c)$ of degree $k$. Since $(f_1, \cdots, f_c) \subset \mathcal{I}$, we can assume that
      \begin{align*}
        g = & \displaystyle\sum_{i =1}^{j}w_0^ig_i(w_1, \cdots, w_n) + g_0(w_1,, \cdots, w_N).
      \end{align*}
      By the definition of $M(\textbf{F})$, $g \in \mathcal{I}$ is equivalent to $g_0 \in \mathcal{I}$ and $g_i \equiv 0$, that is to say, $g = g_0$ is of degree $k$. We have a homogeneous decomposition of $g$ as following
      \[
        g = \sum h_{i,j}(w_1, \cdots, w_n, w_{n+1}, \cdots, w_N),
      \]
      where $h_{i,j}(w_1, \cdots, w_n, w_{n+1}, \cdots, w_N)$ are homogeneous polynomials of degree $i$ w.r.t. $w_1, \cdots, w_n$,
      and are homogeneous polynomials of degree $j$ w.r.t. $w_{n+1}, \cdots, w_{N}$, respectively. Again $g \in \mathcal{I}$ is equivalent to $h_{i,j} \in \mathcal{I}$. If $h_{i,0} \in \mathcal{I}$, then $h_{i,0}\equiv 0$.

      Therefore, we can also assume that $$g = h_{i,j}(w_1, \cdots, w_n, w_{n+1}, \cdots, w_N) \in \mathcal{I}\backslash (f_1, \cdots, f_c) $$ for some $j > 0$, and $w_{n+i} \nmid g, \; \forall\; 1 \leq i \leq c$.
      Thus, we get $$g(x_1, \cdots, x_n, Q_1, \cdots, Q_c) \equiv 0.$$
      By reassigning the index of $\{w_{n+1}, \cdots, w_{n+c}\}$ if necessary, we can rewrite $g$ into the form,
      \[
        g = \sum_{t =1}^l w_{n+t}F_t(w_1, \cdots, w_n, w_{n+t}, \cdots, w_{n+c}),
      \]
      with $F_t \notin (w_{n+1}, \cdots, w_{n+t-1})$ and $l \geq 2$. 
      Therefore, 
      \[
        \sum_{t =1}^l Q_{t}F_t(x_1, \cdots, x_n, Q_{t}, \cdots, Q_{c}) = 0.
      \]
      This implies that the image of $Q_l$ in $\mathbb{C}[x_1, \cdots, x_n]/(Q_1, \cdots, Q_{l-1})$ is a zero-divisor, which contradicts the fact that the sequence $(Q_1, \cdots, Q_{c})$ is a regular sequence.
    \end{enumerate}
	This concludes the proof.
  \end{proof}

  \begin{rmk}
      (1) Theorem \ref{thm2} follows from Theorem \ref{thm41} and Theorem \ref{thm42}. \\
      (2) For a smooth variety which is covered by lines, Theorem 2.4 of \cite{Rasso13} proved a similar result.
  \end{rmk}

  \begin{exam}\label{exam41}
    Let $F^2 = \left\langle x_1^2, \cdots, x_i^2 \right\rangle$, for $1 \leq i \leq n$. $\textbf{F} = F^0 \oplus F^1 \oplus F^2$, then $M(\textbf{F})$ is a complete intersection of codimension $i$.
  \end{exam}

  \begin{exam}
    Let $X$ be an Euler-symmetric projective surface. If $X$ is a complete intersection, then the corresponding symbol system $\textbf{F}$ is of rank $2$, and it is exactly one of the following:
    \begin{enumerate}
      \item $F^2 = \left\langle Q \right\rangle $, where $Q$ is a homogeneous polynomial of degree $2$ in $\mathbb{C}[x_1, x_2]$. In this case, $X$ is smooth if $Q$ is not a square, and $X$ has only one singular point if $Q$ is square. 
      \item $F^2 =\left\langle x_1^2, px_1x_2 + qx_2^2 \right\rangle $, where $q\in \mathbb{C}^{\times}$ and $p\in \mathbb{C}$. In this case, $X$ is non-normal and $\operatorname{Sing}(X) = \mathbb{P}^1$.
    \end{enumerate}
  \end{exam}

  \begin{exam}
    Let $F^2 = \left\langle \sum_{i=1}^n x_i^2, \sum_{i=1}^n \lambda_i x_i^2 \right\rangle$, $\lambda_i \neq \lambda_j$, for $i\neq j$. According to Proposition 2.1 in \cite{reid72}, $\textbf{Bs}(\textbf{F})$ is smooth and of codimension $2$ in $\mathbb{P}^{n-1}$, if $n\geq 3$. By Theorem \ref{thm2}, $M(\textbf{F})$ is a complete intersection, hence not smooth. Proposition 4.4 in \cite{FH20} says that if $M(\textbf{F})$ is nonsingular, then the base loci $\textbf{Bs}(\textbf{F})$ is nonsingular. This example implies that the converse is false.
  \end{exam}

  \begin{exam}
    Let $F^2 = \left\langle Q_1, \cdots, Q_c \right\rangle$, and $\textbf{Bs}(\textbf{F})$ be a set-theoretic complete intersection. We can easily see that $$\operatorname{Sing}(M(\textbf{F})) \supseteq \mathbb{P}^{c-1} = \{[0:\cdots :0:w_{n+1}:\cdots:w_{n+c}]\in \mathbb{P}^{n+c}\},$$ if $c\geq 2$.
    This is another way to see that an Euler-symmetric projective variety which is a complete intersection is not smooth.
  \end{exam}

  \begin{exam}
    Let $F^2 = \left\langle x_1x_5-x_2^2, x_1x_6-x_3^2, x_1x_7-x_2x_3 \right\rangle$, then $\textbf{Bs}(\textbf{F})\subset \mathbb{P}^{6}$ is a set-theoretic complete intersection of codimension $3$, and the corresponding Euler-symmetric variety $M(\textbf{F})\subset \mathbb{P}^{10}$ is a complete intersection of codimension $3$. But $I(\textbf{Bs}(\textbf{F})) = (x_1x_5-x_2^2, x_1x_6-x_3^2, x_1x_7-x_2x_3, x_3x_5-x_2x_7, x_2x_6-x_3x_7)$ and $x_3x_5-x_2x_7 \notin (x_1x_5-x_2^2, x_1x_6-x_3^2, x_1x_7-x_2x_3)$. This implies that $\textbf{Bs}(\textbf{F})\subset \mathbb{P}^{6}$ is not a complete intersection.
  \end{exam}

  \begin{exam}
    Let $F^2 = \left\langle x_1x_3-x_2^2, x_3x_5-x_4^2, x_1x_5-2x_2x_4 +x_3^2 \right\rangle$, then $\textbf{Bs}(\textbf{F})$ is the rational normal quartic curve in $\mathbb{P}^{4}$ by Theorem 1 of \cite{torrente2015rational}. By a direct calculation, $M(F)\subset \mathbb{P}V_{\textbf{F}} = \mathbb{P}^7$ is a complete intersection, i.e.,
    \[
      \mathcal{I} = (w_0w_6-w_1w_3+w_2^2, w_0w_7-w_3w_5+ w_4^2, w_0w_7 -w_1w_5+2w_2w_4-w_3^2).
    \]
  \end{exam}

  \begin{exam}
    Let $F^2 = \left\langle x_1^2, x_1x_2 \right\rangle$, then $M(\textbf{F}) \subset \mathbb{P}^4$ is rational normal scroll of degree $3$. It isn't a complete intersection by Theorem \ref{thm42},
    but it is a set-theoretic complete intersection. For more details, we refer readers to \cite{buadescu2010grothendieck} and \cite{robbiano1983set}.
  \end{exam}

\section*{Acknowledgement}
I'm greatly indebted to my advisor Professor Baohua Fu for illuminating discussions, guidance and revising this paper.  I also want to thank Zheng Xu and Renjie Lyu for helpful communications. Last but not least, I want to thank the referees for the valuable comments and suggestions, which do improve the presentation of this paper much.

\bibliography{Euler-symmetric}
\bibliographystyle{abbrv}

\end{document}